\definecolor{tocolor}{rgb}{.1,.1,.1}
\definecolor{urlcolor}{rgb}{.2,.2,.6}
\definecolor{linkcolor}{rgb}{.1,.1,.5}
\definecolor{citecolor}{rgb}{.4,.2,.1}
\definecolor{gray}{rgb}{.8,.8,.8}
\newcommand{\thdef}[2]{
	\newaliascnt{#1}{theorem}  
	\newtheorem{#1}[#1]{#2}
	\aliascntresetthe{#1}  
	\newtheorem*{#1*}{#2}
	\expandafter\newcommand\expandafter{\csname #1autorefname\endcsname}{#2}
}
\newtheorem{theorem}{Theorem}[section]
\newtheorem*{theorem*}{Theorem}
\theoremstyle{definition}
\theoremstyle{remark}
\newcommand{\defn}[1]{\textbf{\emph{#1}}}
\newcommand{\abrac}[1]{\left\langle#1\right\rangle}
\newcommand{\cbrac}[1]{\left\{#1\right\}}
\newcommand{\rbrac}[1]{\left(#1\right)}
\newcommand{\set}[2]{\left\{#1 \,\middle|\, #2 \right\}}
\newcommand{\CC}{\mathbb{C}}
\newcommand{\PP}{\mathbb{P}}
\newcommand{\X}{\mathsf{X}}
\newcommand{\U}{\mathsf{U}}
\newcommand{\E}{\mathsf{E}}
\newcommand{\Y}{\mathsf{Y}}
\newcommand{\bL}{\mathsf{L}}
\newcommand{\W}{\mathsf{W}}
\newcommand{\bS}{\mathsf{S}}
\newcommand{\D}{\mathsf{D}}
\newcommand{\Char}[1]{\mathsf{Char}(#1)}
\newcommand{\ctb}[1]{\mathsf{T}^*#1}
\newcommand{\reg}{\mathrm{reg}}
\newcommand{\sing}{\mathrm{sing}}
\newcommand{\red}{\mathrm{red}}
\DeclareMathOperator{\codim}{codim}
\DeclareMathOperator{\coker}{coker}
\newcommand{\pis}{\pi^\sharp}
\newcommand{\cT}[1]{\mathcal{T}_{#1}} 
\newcommand{\Tcx}[2][\bullet]{\mathcal{T}_{#2}^{#1}} 
\newcommand{\Tpol}[2][\bullet]{\wedge^{#1} \cT{#2}} 
\newcommand{\forms}[2][\bullet]{\Omega^{#1}_{#2}} 
\newcommand{\cO}[1]{\mathcal{O}_{#1}} 
\newcommand{\cI}{\mathcal{I}} 
\newcommand{\m}{\mathfrak{m}} 
\newcommand{\sD}[2][]{\mathcal{D}^{#1}_{#2}} 
\newcommand{\sH}{\mathcal{H}} 
\newcommand{\sN}{\mathcal{N}} 
\newcommand{\cM}[1][\bullet]{\mathcal{M}^{#1}} 
\newcommand{\cMpi}[1][\bullet]{\cM[#1]_\pi} 
\newcommand{\sQ}{\mathcal{Q}}
\newcommand{\sF}{\mathcal{F}}
\DeclareMathOperator{\Sym}{Sym}
\newcommand{\Pic}[2][]{\mathsf{Pic}_{#1}(#2)}
\newcommand{\coH}[2][\bullet]{\mathsf{H}^{#1}(#2)}
\newcommand{\IH}[2][\bullet]{\mathsf{IH}^{#1}(#2)}
\newcommand{\scoH}[2][\bullet]{\mathcal{H}^{#1}(#2)}
\newcommand{\Hpi}[2][\bullet]{\mathsf{H}^{#1}_\pi(#2)} 
\newcommand{\sHpi}[1][\bullet]{\mathcal{H}^{#1}_\pi} 
\newcommand{\sIC}[1]{\mathcal{IC}_{#1}} 
\newcommand{\sK}{\mathcal{K}}
\newcommand{\sExt}[2][\bullet]{\mathcal{E}xt^{#1}(#2)}
\newcommand{\sHom}[1]{\mathcal{H}om(#1)}
\newcommand{\dd}{\mathrm{d}}
\newcommand{\dpi}{\dd_\pi}
\newcommand{\lie}[1]{\mathscr{L}_{#1}}
\newcommand{\hook}[1]{\iota_{#1}}
\newcommand{\cvf}[1]{\partial_{#1}}
\newcommand{\BV}[1]{\Delta_{#1}}
\newcommand{\Ham}[1]{\xi_{#1}}
\newcommand{\Mod}{\zeta}
\theoremstyle{theorem}
\newtheorem*{thm-mod}{Theorem \ref{thm:modular}}
\newcommand{\thmmod}{
Suppose that $(\X,\pi)$ is a complex Poisson manifold.  Then the following statements hold:
\begin{enumerate}
\item $\Char{\X,\pi}$ is contained in the union of the conormal bundles of the modular leaves.  In particular, if every point has a neighbourhood
in which the modular foliation has finitely many leaves, then $(\X,\pi)$ is holonomic.
\item The conormal bundle of every even-dimensional modular leaf is contained in  $\Char{\X,\pi}$.  Hence, if $(\X,\pi)$ is holonomic, there are at most finitely many such leaves in any compact subset.
\end{enumerate}
}
\newtheorem*{thm-norm}{Theorem \ref{thm:normal}}
\newcommand{\thmnormal}{
Suppose that $(\X,\pi)$ is a holonomic Poisson manifold whose degeneracy divisor $\Y$ is normal, let $\U = \X \setminus \Y$ be the open symplectic leaf, and let $\Hpi{\X}$ be the hypercohomology of the Lichnerowicz complex.  Then the natural restriction map
\[
\Hpi[i]{\X} \to \coH[i]{\U;\CC}
\]
is an isomorphism for $i=0,1$ and injective for $i=2$.
}
\newtheorem*{thm-ell}{Theorem \ref{thm:elliptic}}
\newcommand{\thmelliptic}{
Let $\pi = q_{d,1}(\E,\zeta)$ be a Feigin--Odesskii Poisson structure on $\PP^{d-1}$, where  $d$ is odd.  Then $\pi$ is holonomic, and we have canonical isomorphisms
\begin{align*}
\Hpi[0]{\PP^{d-1}} &= \CC & \Hpi[1]{\PP^{d-1}}&=0 & \Hpi[2]{\PP^{d-1}} &\cong \coH[1]{\E;\CC}.
\end{align*}
Moreover, variation of the parameters $(\E,\Mod)$ gives the universal analytic deformation of $\pi$.
}
\begin{document}

\title{Holonomic Poisson manifolds and deformations of elliptic algebras}

\author{Brent Pym\thanks{School of Mathematics, University of Edinburgh, \href{mailto:brent.pym@ed.ac.uk}{brent.pym@ed.ac.uk}} \and Travis Schedler\thanks{Department of Mathematics, Imperial College London, \href{mailto:t.schedler@imperial.ac.uk}{t.schedler@imperial.ac.uk}}}
\maketitle

\begin{abstract}
We introduce a natural nondegeneracy condition for Poisson structures, called holonomicity, which is closely related to the notion of a log symplectic form.  Holonomic Poisson manifolds are privileged by the fact that their deformation spaces are as finite-dimensional as one could ever hope: the corresponding derived deformation complex is a perverse sheaf.  We develop some basic structural features of these manifolds, highlighting the role played by the divergence of Hamiltonian vector fields.  As an application, we establish the deformation-invariance of certain families of Poisson manifolds defined by Feigin and Odesskii, along with the ``elliptic algebras'' that quantize them.
\end{abstract}

\tableofcontents

\newpage

\section{Introduction}

During the past decade, there has been renewed interest in the deformations of complex analytic Poisson manifolds, owing to their importance in the study of moduli spaces, noncommutative algebras, and generalized complex structures.  Much of the work has focused on the question of unobstructedness of deformations, generalizing the classical result of Bogomolov~\cite{Bogomolov1978} in the symplectic case.  For example, Ran~\cite{Ran2017a,Ran2017b} extended Bogomolov's result to Poisson manifolds given by normal crossings compactifications of symplectic manifolds, under some assumptions on the behaviour of the Poisson structure at infinity.

In a different direction (via generalized complex geometry), there was a remarkable discovery of Goto~\cite{Goto2009}, generalized in turn by Hitchin~\cite{Hitchin2012}, and Fiorenza--Manetti~\cite{Fiorenza2012}: just as a  function on a Poisson manifold gives a symmetry via the flow of its Hamiltonian vector field, a closed $(1,1)$-form gives a \emph{deformation}.  Under quite general conditions (e.g.~if the $\partial\overline{\partial}$-lemma holds), this deformation is unobstructed. Hitchin~\cite{Hitchin2012} used this viewpoint to link Poisson deformations of the Hilbert scheme of the projective plane with the Hilbert schemes of noncommutative planes introduced by Nevins--Stafford~\cite{Nevins2007}.

In this paper, we ask a different question: which Poisson manifolds $\X$ have  finite-dimensional deformation spaces, and how can such deformation spaces be determined?   In the symplectic case, all deformations are locally trivial by the Darboux theorem, and the problem boils down to computing the second de Rham cohomology.  But in general, the problem is very difficult.  Instead of the de Rham complex, the deformations are governed by the polyvector fields $\Tpol{\X}$, with differential obtained from the Poisson structure, as explained by~Lichnerowicz~\cite{Lichnerowicz1977}.  The global hypercohomology of this complex (often called the ``Poisson cohomology'') controls the deformations of $\X$ not just as a holomorphic Poisson manifold, but as a generalized complex manifold~\cite{Gualtieri2011,Hitchin2003}.

The Lichnerowicz complex is extremely sensitive to the local singularities of the Poisson bracket.  In contrast with the symplectic case, the local deformations spaces are almost always infinite-dimensional, and highly obstructed.  It therefore seems natural to single out those Poisson manifolds whose local deformation spaces are finite-dimensional, even at the ``derived'' level.  In other words, we look for Poisson structures for which the stalks of all the cohomology sheaves of $\Tpol{\X}$ are finite-dimensional. 

The theory of algebraic analysis offers a powerful toolbox for addressing such questions. Since the Lichnerowicz differential is a differential operator,  it induces a complex $\Tpol{\X}\otimes\sD{\X}$ of modules over the sheaf of differential operators $\sD{\X}$.  The singular support of this complex gives a canonical coisotropic subvariety of the cotangent bundle $\ctb{\X}$, which we call the \defn{characteristic variety} of the Poisson manifold.  We introduce the following natural definition:

\begin{definition}
A Poisson manifold is \defn{holonomic} if its characteristic  variety is Lagrangian.
\end{definition}

By Kashiwara's constructibility theorem for D-modules~\cite{Kashiwara1974/75}, holonomicity ensures (and is roughly equivalent to) the
finite-dimensionality of the stalks of the Lichnerowicz cohomology
sheaves.

Holonomicity is a sort of nondegeneracy condition on a Poisson manifold $\X$, constraining the singularities that can occur along strata of arbitrary codimension.  For instance, in codimension zero, it is equivalent to the existence of an open dense symplectic leaf.  But the Poisson structure may then degenerate along a hypersurface $\Y\subset \X$ that is  foliated by symplectic leaves of smaller dimension.  In codimension one, holonomicity is equivalent to requiring that $\Y$ be reduced, i.e., all irreducible components occur with multiplicity one.  Equivalently, the Poisson structure must be defined by a ``log symplectic'' form in the sense of Goto~\cite{Goto2002}.  We note that log symplectic structures have recently received considerable attention starting with the works~\cite{Gualtieri2014,Guillemin2014,Radko2002} in the $C^\infty$ setting and \cite{Goto2002,Lima2014,Ran2017b} in the holomorphic setting, assuming that $\Y$ is smooth or has normal crossings singularities.

In higher codimension, ``holonomic'' is a stronger condition than ``log symplectic'', but it is still flexible enough to allow some quite complicated singularities.  For instance, any plane curve singularity can appear in codimension two, and simple elliptic surface singularities can appear in codimension three~\cite{Pym2017}. Thus, although holonomic Poisson manifolds are quite rigid, there are many natural and interesting examples arising from partial compactifications of holomorphic symplectic manifolds, such as various moduli spaces.

The development of the general theory of holonomic Poisson manifolds is a work in progress; we report here on some preliminary results that illustrate the utility of the approach in concrete applications.  We explain that holonomicity is closely related to an intriguing local symmetry of Poisson manifolds discovered by Brylinski--Zuckerman~\cite{Brylinski1999} and Weinstein~\cite{Weinstein1997}: the modular vector field.  It is the ``divergence'' of the Poisson tensor---the obstruction to the existence of Hamiltonian-invariant volume forms.  (It is also the semi-classical limit of the Nakayama automorphism of noncommutative geometry.)  By flowing the symplectic leaves of a Poisson manifold along its modular vector field, we obtain a new foliation whose leaves may be strictly larger.  We prove the following result, giving effective geometric criteria for (non-)holonomicity, analogous to results in \cite{Etingof2016} concerning coinvariants for Lie algebras of vector fields:
\begin{thm-mod}
\thmmod
\end{thm-mod}

In \autoref{sec:compfact}, we describe a key feature of holonomic Poisson manifolds: their Lichnerowicz complexes are perverse sheaves (\autoref{thm:perverse}), a much stronger condition than finite-dimensionality of the stalk cohomology.  This property allows one to bring the powerful tools of intersection cohomology theory to bear on the deformation problem, giving systematic control over the contributions made by singularities of high codimension, which tend to be difficult to access by direct means.  A typical output of this approach is the following result:
\begin{thm-norm}
\thmnormal
\end{thm-norm}
The practical calculation of the cohomology is then facilitated by a Gysin sequence that relates the cohomology of $\U$ to the \emph{intersection cohomology} of $\Y$.  In higher degree, or in the non-normal case, the link with the cohomology of the complement becomes much more complicated, involving additional contributions from intersection cohomology of local systems on various singular strata. (See \autoref{sec:surf} for the case of surfaces.)  We hope to describe these ``composition factors'' in more detail in  future work.

As an application, we consider the ``elliptic'' Poisson brackets constructed by Feigin and Odesskii~\cite{Feigin1989,Feigin1998} in the late 1980s. They give a family  $q_{d,r}(\E,\Mod)$ of Poisson brackets on the projective space $\PP^{d-1}$, determined up to isomorphism by a pair $(d,r)$ of coprime integers, a smooth curve $\E$ of genus one, and a vector field $\Mod$ on $\E$.  They arise most naturally by viewing $\PP^{d-1}$ as a certain moduli space of bundles on $\E$; see \cite{Polishchuk1997}.  One might ask whether the two continuous parameters $(\E,\Mod)$ describe the full deformation space of these Poisson structures.  For $r =1$, and small values of $d$, this follows from various direct classification results~\cite{Cerveau1996,Loray2013,Polishchuk1997,Pym2017}, but those methods do not extend easily to arbitrary dimension.  Our results above apply for $r=1$ and arbitrary odd $d$, allowing us to establish the deformation-invariance of the family in that case:
\begin{thm-ell}
\thmelliptic
\end{thm-ell}
Since the family $q_{d,1}(-,-)$ is algebraic, we have an immediate corollary:
\begin{corollary}\label{cor-ell}
If $d$ is odd, the family  $q_{d,1}(-,-)$ forms a Zariski open set in the moduli space of Poisson structures on $\PP^{d-1}$.  
\end{corollary}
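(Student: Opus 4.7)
The plan is to deduce the corollary from \autoref{thm:elliptic} by combining its deformation-theoretic content with the algebraicity of the Feigin--Odesskii construction. First I would set up the moduli carefully. The parameters $(\E,\Mod)$ form a smooth two-dimensional algebraic stack $\mathcal{P}$---concretely, the complement of the zero section in the total space of the tangent line bundle of the moduli stack of elliptic curves---and the assignment $(\E,\Mod)\mapsto q_{d,1}(\E,\Mod)$ defines an algebraic morphism $\phi\colon \mathcal{P}\to \mathcal{M}$, where $\mathcal{M}$ is the moduli stack of Poisson structures on $\PP^{d-1}$. The latter can be realized, for example, as the quotient by $\mathrm{PGL}_d$ of the Zariski closed integrability locus $\{[\pi,\pi]=0\}$ inside the finite-dimensional affine space $\coH[0]{\PP^{d-1};\Tpol[2]{\PP^{d-1}}}$ of bivectors.

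Next I would apply \autoref{thm:elliptic} to show that $\phi$ is étale. The theorem identifies $\Hpi[2]{\PP^{d-1}}$ with $\coH[1]{\E;\CC}$ via the Kodaira--Spencer map, whose source is naturally the tangent space of $\mathcal{P}$ at $(\E,\Mod)$, and asserts $\Hpi[1]{\PP^{d-1}}=0$, so $\pi$ admits no nontrivial infinitesimal symmetries modulo Hamiltonian ones. Crucially, the universality assertion says that the induced map on analytic germs at $(\E,\Mod)$ is an isomorphism. Since $\PP^{d-1}$ is projective, GAGA identifies these Poisson cohomology groups with their algebraic counterparts, and universality persists at the level of formal neighbourhoods; hence $\phi$ is formally étale at every point, and therefore étale as a morphism of finite-type algebraic stacks.

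The corollary then follows at once: étale morphisms of algebraic stacks are open in the Zariski topology, so the image of $\phi$ is Zariski open in $\mathcal{M}$. Alternatively, one can argue without stack language: the image of $\phi$ in the affine bivector variety is constructible by Chevalley's theorem and analytically open by \autoref{thm:elliptic}, and a constructible subset of an algebraic variety that is analytically open is automatically Zariski open (its complement is constructible and analytically closed, hence Zariski closed).

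The main obstacle I anticipate is foundational rather than computational: one must set up $\mathcal{M}$ with sufficient care to identify its Zariski tangent and automorphism spaces at $\pi$ with the Poisson cohomology groups, and to match the algebraic differential of $\phi$ with the analytic Kodaira--Spencer map furnished by \autoref{thm:elliptic}. Once these identifications are secured, the rest of the argument reduces to either a standard property of étale morphisms or, equivalently, to the analytic-to-Zariski openness principle for constructible sets.
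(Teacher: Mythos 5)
Your proposal is correct and fills in exactly the argument the paper leaves implicit: the paper offers no written proof, asserting the corollary to be ``immediate'' from \autoref{thm:elliptic} together with the algebraicity of the family, and your elaboration---universality of the analytic deformation makes the parameter-to-moduli map a local isomorphism, and Chevalley constructibility plus analytic openness (or \'etaleness of the induced map of stacks) upgrades this to Zariski openness---is the intended route. The foundational care you flag about setting up the moduli space and matching the Kodaira--Spencer map is appropriate but does not change the substance of the argument.
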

Note that Feigin--Odesskii~\cite{Feigin1989,Feigin1998} also introduced noncommutative ``elliptic algebras'' $Q_{d,r}(-,-)$ that quantize their Poisson structures.  Tate and Van den Bergh~\cite{Tate1996} showed that these algebras are Artin--Schelter regular with Hilbert series $(1-t)^{-d}$. Hence by \cite[Theorem 2.10]{Pym2015}, based on results in deformation quantization~\cite{Bondal1993a,Dolgushev2009,Kontsevich2001,Kontsevich2003}, we have a further immediate consequence:
\begin{corollary}
If $d$ is odd, the family $Q_{d,1}(-,-)$ forms a Zariski open set in the moduli space of quadratic Calabi--Yau algebras with Hilbert series $(1-t)^{-d}$.
\end{corollary}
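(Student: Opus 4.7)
The plan is to deduce the corollary directly from Corollary~\ref{cor-ell} via the deformation-quantization correspondence packaged in \cite[Theorem 2.10]{Pym2015}, exactly as the sentence preceding the statement already foreshadows.

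First, as recalled just above the statement, Tate--Van den Bergh~\cite{Tate1996} show that each $Q_{d,1}(\E,\Mod)$ is Artin--Schelter regular with Hilbert series $(1-t)^{-d}$, so the family $Q_{d,1}(-,-)$ does lie inside the moduli space of quadratic Calabi--Yau algebras with that Hilbert series. I would then invoke \cite[Theorem 2.10]{Pym2015}, which rests on the formality theorems of \cite{Bondal1993a,Dolgushev2009,Kontsevich2001,Kontsevich2003}. That result establishes, for algebras of exactly this type, an algebraic identification between the moduli space of quadratic Calabi--Yau quantizations of $\PP^{d-1}$ and the moduli space of Poisson structures on $\PP^{d-1}$, under which $Q_{d,1}(\E,\Mod)$ corresponds to $q_{d,1}(\E,\Mod)$ and which is compatible with the Zariski topology. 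Combined with Corollary~\ref{cor-ell}, which asserts that $q_{d,1}(-,-)$ is Zariski open in the Poisson moduli space when $d$ is odd, this immediately yields Zariski openness of $Q_{d,1}(-,-)$ in the quantum moduli space.

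The main subtlety---and where the bulk of the work would lie if one were proving this from scratch---is in promoting a deformation quantization, which a priori is only a formal construction, to an honest algebraic matching of moduli spaces. This does not hold for arbitrary Poisson structures, but for \emph{quadratic} Calabi--Yau algebras the order-by-order obstructions to quantizing and to preserving quadraticity cancel simultaneously, so deformations of such an algebra and deformations of its semi-classical Poisson bracket define the same geometric object. Since this is precisely what is packaged in \cite[Theorem 2.10]{Pym2015}, the corollary follows at once.
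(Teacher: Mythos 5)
Your proposal matches the paper's own argument: the corollary is presented there as an immediate consequence of Corollary~\ref{cor-ell}, the Tate--Van den Bergh result on Artin--Schelter regularity and Hilbert series, and the moduli correspondence of \cite[Theorem 2.10]{Pym2015}, which is exactly the chain of citations you assemble. The paper gives no further detail, so your additional remarks on why the deformation-quantization correspondence is algebraic are a reasonable gloss on what is already packaged in the cited theorem rather than a divergence from the paper's route.
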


In closing, let us remark that our approach is inspired by a recent
series of papers by Etingof and the second author, which used similar
techniques to address a different problem in Poisson geometry (see,
e.g., \cite{Etingof2010,Etingof2014,Etingof2017}).  Using a single
D-module, they calculated the space of Poisson traces for various
singular symplectic varieties arising in representation theory. They
showed that the holonomicity of their D-module is equivalent to the
local finiteness of the foliation by symplectic leaves, thus
establishing finite-dimensionality of the space of Poisson traces in
that case.  In the context of symplectic linear quotients,
finite-dimensionality of the space of Poisson traces had been
conjectured by Alev--Farkas~\cite{Alev2003} and proved by
Berest--Etingof--Ginzburg~\cite[Appendix]{Berest2004}, and their proof
inspired the D-module constructions of \emph{op.~cit.}

We note that the D-module controlling Poisson traces is different from
the complex used here; it is related to the Poisson homology of
Brylinski~\cite{Brylinski1988} rather than the Poisson cohomology of
Lichnerowicz, and the two are not, in general, dual.  Moreover, for
smooth varieties, the local finiteness of the symplectic foliation is
equivalent to the variety being symplectic, which is much
stronger (and less subtle) than the holonomicity condition considered
in the present paper.  In fact, it is elementary to see that
Brylinski's complex has finite-dimensional stalk cohomology if and only if the
manifold is symplectic.  Thus the question of
finite-dimensionality for various cohomological invariants of Poisson
manifolds is sensitive to the specific invariant under consideration.

\paragraph{Acknowledgements:} It is an honour to dedicate this paper to Nigel Hitchin on the occasion of his 70th birthday, with gratitude.  At various stages of this project, B.P.~has been supported by EPSRC Grant EP/K033654/1, a Junior Research Fellowship at Jesus College (Oxford) and a William Gordon Seggie Brown Research Fellowship at the University of Edinburgh.  T.S.~was partially supported by NSF Grant DMS-1406553.  This research was supported by an EPSRC Platform Grant at Imperial College London.


\section{Holonomic Poisson manifolds}

\subsection{Poisson structures and the Lichnerowicz complex}

In this section, we recall some basic definitions in Poisson geometry (see, e.g.~\cite{Dufour2005,Laurent-Gengoux2013,Polishchuk1997} for details).  Let $\X$ be a complex manifold.  We denote by $\cO{\X}$, $\cT{\X}$ and $\forms{\X}$ the sheaves of  holomorphic functions, vector fields and differential forms, respectively.  Recall that a \defn{Poisson structure on $\X$} is a Lie bracket
\[
\{-,-\} : \cO{\X} \times \cO{\X} \to \cO{\X}
\]
that is a derivation in each argument.  Thus a Poisson structure determines, and is  determined by, a global holomorphic bivector field
\[
\pi \in \Gamma(\X,\Tpol[2]{\X})
\]
according to the formula $\{f,g\} = \abrac{\pi,\dd f \wedge \dd g}$ for $f,g \in \cO{\X}$.  The Jacobi identity for the bracket corresponds to the nonlinear integrability condition
\begin{align}
[\pi,\pi] = 0 \in \Gamma(\X,\Tpol[3]{\X}). \label{eqn:integrability}
\end{align}
where $[-,-]$ denotes the Schouten--Nijenhuis bracket of polyvector fields.

There is a natural map $\pis : \forms[1]{\X} \to \cT{\X}$ given by contraction with $\pi$.  Any function $f \in \cO{\X}$ has a Hamiltonian vector field $\Ham{f} = \pis(\dd f)$, defined so that $\lie{\Ham{f}}g = \{f,g\}$.  The Hamiltonian vector fields generate an involutive subsheaf of $\cT{\X}$, giving a possibly singular foliation by even-dimensional \defn{symplectic leaves}.

\begin{example}\label{ex:surf-intro}Throughout the paper, we will illustrate the general theory using Poisson surfaces (the case $\dim \X = 2$) as examples.  In this case, the condition \eqref{eqn:integrability} holds automatically, and therefore a Poisson structure $\pi \in \Tpol[2]{\X} \cong \det \cT{\X}$ is simply a section of the anti-canonical line bundle.  The zero locus of $\pi$, if nonempty, is a curve $\Y \subset \X$, giving an effective anticanonical divisor.  In local coordinates $(w,z)$ on $\X$, we may write
\[
\pi = f \cvf{w}\wedge\cvf{z}
\]
for some holomorphic function $f = f(w,z)$, so that the Hamiltonian vector fields of $w$ and $z$ are $\Ham{w} = f\cvf{z}$ and $\Ham{z} = -f\cvf{w}$.  They are linearly independent on $\U = \X \setminus \Y$ and vanish at the individual points of $\Y$.
As a result, the symplectic leaves come in two types: the open set $\U$ is a two-dimensional leaf, and the individual points of $\Y$ are zero-dimensional leaves. \qed
\end{example}

As observed by Lichnerowicz~\cite{Lichnerowicz1977}, the adjoint action of $\pi$ defines a differential
\[
\dpi = [\pi,-] : \Tpol{\X} \to \Tpol[\bullet +1]{\X}.
\]
This differential and the Schouten--Nijenhuis bracket make $\Tpol{\X}$ into a sheaf of differential graded Lie algebras, up to a shift in degree.  The cohomology sheaves $\sHpi = \scoH{\Tpol{\X},\dpi}$ have the following interpretations in low degrees: 
\begin{itemize}
\item $\sHpi[0] \subset \cO{\X}$ is the sheaf of Casimir functions---functions whose Hamiltonian vector fields vanish.
\item $\sHpi[1]$ is the sheaf of local infinitesimal outer automorphisms---vector fields $\xi$ such that $\lie{\xi}\pi = 0$, modulo the Hamiltonian ones.
\item $\sHpi[2]$ gives the isomorphism classes of first-order local deformations of $\pi$.
\item $\sHpi[3]$ gives the obstructions to lifting local deformations to higher order.
\end{itemize}

The interpretation of the global hypercohomology $\Hpi{\X}$ is more subtle. Gualtieri~\cite{Gualtieri2011} showed that it governs deformations of $(\X,\pi)$ as a generalized complex manifold (up to Courant isomorphism), rather than a complex Poisson manifold (up to analytic isomorphism).  The former has the advantage of capturing the full Hochschild cohomology of $\X$ when $\pi = 0$.  The latter can be described using the truncated complex $\Tpol[\ge 1]{\X}$ as in~\cite{Ginzburg2004,Namikawa2008}.  When $h^1(\cO{\X}) = h^2(\cO{\X}) = 0$, these two types of deformation are equivalent, and one has the following result:
\begin{theorem}\label{thm:gc-def}
If $(\X,\pi)$ is a compact complex Poisson manifold such that $h^{1}(\cO{\X}) = h^{2}(\cO{\X}) = 0$, then $(\X,\pi)$ has a versal deformation whose base is an analytic germ $\W \subset (\Hpi[2]{\X},0)$.  If $\Hpi[1]{\X}=0$, this deformation is universal.
\end{theorem}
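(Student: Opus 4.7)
The plan is to reduce to standard Kuranishi theory for the DGLA controlling Poisson deformations. As the excerpt notes (citing \cite{Ginzburg2004,Namikawa2008}), deformations of $(\X,\pi)$ as a complex Poisson manifold are governed by the truncated Lichnerowicz complex $\Tpol[\ge 1]{\X}$ endowed with $\dpi$ and the Schouten bracket. The first step is to replace this sheaf of DGLAs by a quasi-isomorphic global DGLA $L^\bullet$ via a soft (e.g.\ Dolbeault) resolution, so that isomorphism classes of Poisson deformations over an Artin base correspond to Maurer--Cartan elements of $L^\bullet$ modulo gauge. Compactness of $\X$ guarantees that the cohomology of $L^\bullet$ is finite dimensional in each degree.

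With this model in hand, the existence of a versal deformation is immediate from the general Kuranishi/Goldman--Millson theorem: the Maurer--Cartan functor of $L^\bullet$ admits a versal analytic family whose base is the zero-germ of the Kuranishi map $\kappa \colon H^1(L^\bullet) \to H^2(L^\bullet)$. Here $H^1(L^\bullet) = \Hpi[2]{\Tpol[\ge 1]{\X}}$ parameterizes first-order deformations and $H^2(L^\bullet) = \Hpi[3]{\Tpol[\ge 1]{\X}}$ contains the obstructions, so the versal base is an analytic subgerm $\W \subset (\Hpi[2]{\Tpol[\ge 1]{\X}},0)$. Versality upgrades to universality when $H^0(L^\bullet) = \Hpi[1]{\Tpol[\ge 1]{\X}}$ vanishes, since this group classifies the infinitesimal gauge transformations that could act nontrivially on deformations.

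To convert these statements into the form stated in the theorem, I would exploit the short exact sequence of complexes
\[
0 \to \Tpol[\ge 1]{\X} \to \Tpol{\X} \to \cO{\X}[0] \to 0
\]
and the resulting long exact sequence in hypercohomology. Under the hypothesis $h^1(\cO{\X}) = h^2(\cO{\X}) = 0$, a short diagram chase shows that the natural maps $\Hpi[i]{\Tpol[\ge 1]{\X}} \to \Hpi[i]{\X}$ are isomorphisms for $i=1,2$; the only nontrivial input is that the connecting map $\coH[0]{\X;\cO{\X}} \to \Hpi[1]{\Tpol[\ge 1]{\X}}$ vanishes, which holds because constant functions are Casimirs and therefore lift through $\Hpi[0]{\X}$. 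The main subtle point is really the first step, namely verifying that the Maurer--Cartan functor of $L^\bullet$ genuinely represents \emph{complex} (as opposed to generalized complex) Poisson deformations; this is exactly the content of the cited works of Ginzburg--Kaledin and Namikawa, so I would invoke it as a black box and treat the subsequent Kuranishi and cohomological comparison steps as formal.
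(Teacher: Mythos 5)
Your proposal is correct and follows essentially the same route the paper takes: the paper states this theorem without a separate proof, having just explained that complex Poisson deformations are governed by the truncated complex $\Tpol[\ge 1]{\X}$ (citing Ginzburg--Kaledin and Namikawa) and that the hypotheses $h^1(\cO{\X})=h^2(\cO{\X})=0$ make this equivalent to the full Lichnerowicz hypercohomology, exactly the comparison you carry out via the long exact sequence. Your Kuranishi step and the observation that the connecting map $\coH[0]{\X;\cO{\X}}\to\Hpi[1]{\Tpol[\ge 1]{\X}}$ vanishes (global functions on a compact manifold are locally constant, hence Casimir) correctly fill in the details the paper leaves to the cited references.
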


\subsection{D-modules and holonomicity}
\label{sec:holonomic}

We now explain how to reinterpret the Lichnerowicz complex in the language of D-modules.
Let $\sD{\X}$ be the sheaf of differential operators on $\cO{\X}$.  Recall that $\sD{\X}$ is a sheaf of noncommutative algebras on $\X$; it contains $\cO{\X}$ as the scalar operators, and $\cT{\X}$ as the derivations of functions.  More generally, $\sD{\X}$ carries a natural increasing filtration $F^\bullet \sD{\X}$ by the order of differential operators, and the associated graded sheaf is the algebra $\cO{\ctb{\X}} = \Sym{\cT{\X}}$ of polynomial functions on the cotangent bundle $\ctb{\X}$.  The isomorphism $F^k\sD{\X}/F^{k-1}\sD{\X} \cong \Sym^k{\cT{\X}}$ is obtained by taking principal symbols of $k$th order operators. 

If $(\X,\pi)$ is a Poisson manifold, then the Lichnerowicz  differential $\dpi$ is a first-order differential operator.  Hence, following~\cite{Saito1989}, we can encode it in a natural complex of right modules over $\sD{\X}$
\[
\cMpi = (\xymatrix{
\cMpi[0] \ar[r] & \cMpi[1] \ar[r] & \cdots \ar[r] & \cMpi[n] } ),
\]
where $n = \dim \X$, as follows.  The terms in the complex are given by
\[
\cMpi[k] = \cbrac{ \textrm{differential operators }\cO{\X} \to \Tpol[k]{\X} } 
\]
while the right $\sD{\X}$-module structure and the differential $\cMpi \to \cMpi[\bullet+1]$ are obtained using the obvious compositions:
\[
\begin{tikzcd}[cells={nodes={}}]
        \arrow[loop,out=200,in=160,distance=2em]{}{\dpi}  \Tpol{\X} 
        &  \arrow{l}[above]{\cMpi} \cO{\X} \arrow[loop,swap,out=-25,in=25,distance=2em]{}{\sD{\X}} 
    \end{tikzcd}
\]
Notice that there is a canonical isomorphism $\cMpi[k] \cong \Tpol[k]{\X} \otimes_{\cO{\X}} \sD{\X}$  of right $\sD{\X}$-modules, obtained by sending a tensor $\xi\otimes \phi \in \Tpol[k]{\X}\otimes\sD{\X}$ to the operator $\cO{\X} \to \Tpol[k]{\X}$ defined by $g \mapsto \phi(g)\cdot \xi$. Hence we can recover the Lichnerowicz complex from $\cMpi$ as the tensor product
\[
(\Tpol{\X},\dpi) \cong \cMpi \otimes_{\sD{\X}} \cO{\X}.
\]

Let us recall that one can associate to any coherent $\sD{\X}$-module $\sN$ a $\CC^*$-invariant coisotropic subvariety of the cotangent bundle
\[
\Char{\sN} \subset \ctb{\X},
\]
called its \defn{characteristic variety}.  It is obtained by putting a ``good'' filtration on $\sN$, so that the associated graded sheaf becomes a module over $\cO{\ctb{\X}}$, and in fact a coherent sheaf on $\ctb{\X}$.  The characteristic variety is then the support of this sheaf.  We recall that $\sN$ is \defn{holonomic} if $\Char{\sN}$ is Lagrangian.  By convention, the zero module $\sN=0$ is also holonomic.

On a Poisson manifold $(\X,\pi)$, we have, for each $k \ge 0$, the cohomology module $\scoH[k]{\cMpi}$ of the complex $\cMpi$.  It is a coherent right $\sD{\X}$-module and therefore has a characteristic variety $\Char{\scoH[k]{\cMpi}} \subset \ctb{\X}$. 
\begin{definition}
The \defn{characteristic variety of $(\X,\pi)$} is the coisotropic subvariety
\[
\Char{\X,\pi} = \bigcup_{k \ge 0} \Char{\scoH[k]{\cMpi}} \subset \ctb{\X}.
\]
If $\Char{\X,\pi}$ is Lagrangian, we say that $(\X,\pi)$ is \defn{holonomic}.
\end{definition}
Thus $(\X,\pi)$ is holonomic if and only if all cohomology modules are so.  In that case, it turns out that $\scoH[k]{\cMpi} = 0$ for $k \ne n$ (see \autoref{prop:acyclic}).

We will not give a complete geometric description of the characteristic variety $\Char{\X,\pi}$ in this paper.  But we will nevertheless construct enough equations for these varieties to give useful criteria for (non-)holonomicity.  The Hamiltonian vector fields provide a good starting point:
\begin{lemma}\label{lem:hams}
Let $(\X,\pi)$ be a Poisson manifold, let $f \in \cO{\X}$ be a function and view the Hamiltonian vector field $\Ham{f} \in \cT{\X}$ as a linear function on $\ctb{\X}$.  Then $\Ham{f}$ vanishes on $\Char{\X,\pi} \subset \ctb{\X}$.  Put differently, the characteristic variety is contained in the union of the conormal bundles of the symplectic leaves.
\end{lemma}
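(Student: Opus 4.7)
The plan is to compute the principal symbol of the Lichnerowicz differential $\dpi$ and show that the associated graded symbol complex of $\cMpi$ is acyclic away from the subvariety $\sN := \{(x,p)\in\ctb{\X} : \pis(p) = 0\}\subset\ctb{\X}$; the containment of characteristic varieties will then follow from standard $\sD$-module generalities.

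I would begin by equipping the complex $\cMpi$ with the order filtration $F^l\cMpi[k] := \Tpol[k]{\X}\otimes_{\cO{\X}}F^l\sD{\X}$, whose associated graded $\mathrm{gr}^F\cMpi[k]$ corresponds, on $\ctb{\X}$, to the pullback $p^*\Tpol[k]{\X}$ along $p\colon\ctb{\X}\to\X$. To identify $\mathrm{gr}^F\dpi$, I would invoke the Leibniz rule for the Schouten--Nijenhuis bracket, which gives $\dpi(f\xi)-f\dpi(\xi) = [\pi, f]\wedge\xi = -\Ham{f}\wedge\xi$ for $f\in\cO{\X}$ and $\xi\in\Tpol{\X}$. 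Hence $\dpi$ is a first-order operator with principal symbol
\[
\sigma_p(\dpi)\colon \wedge^{\bullet}T_x\X \longrightarrow \wedge^{\bullet+1}T_x\X,\qquad \xi\longmapsto -\pis(p)\wedge\xi,
\]
at any cotangent vector $p\in T^*_x\X$. Equivalently, $\mathrm{gr}^F\dpi$ is the Koszul-type differential given by wedging with the tautological section of $p^*\cT{\X}$ obtained from $\pis$.

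At any point $(x,p)\in\ctb{\X}$ with $v:=\pis(p)\neq 0$, the fibre complex $(\wedge^\bullet T_x\X,\ v\wedge(-))$ is acyclic: picking $\alpha\in T^*_x\X$ with $\langle\alpha, v\rangle=1$, the Cartan identity $v\wedge\hook{\alpha} + \hook{\alpha}(v\wedge(-)) = \mathrm{id}$ furnishes a contracting homotopy. Thus $\mathrm{gr}^F\cMpi$ is exact on $\ctb{\X}\setminus\sN$, and since the filtered cohomology $\mathrm{gr}^F\scoH[k]{\cMpi}$ is a subquotient of $\scoH[k]{\mathrm{gr}^F\cMpi}$ via the spectral sequence of the filtered complex, it is supported in $\sN$; by definition, $\Char{\scoH[k]{\cMpi}}\subseteq\sN$ for all $k$, and so $\Char{\X,\pi}\subseteq\sN$.

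Finally, the linear function $\Ham{f}$ on $\ctb{\X}$ evaluates at $(x,p)$ as $\langle p, \pis(df|_x)\rangle = -\langle df|_x, \pis(p)\rangle$, which vanishes for every $f\in\cO{\X}$ precisely when $\pis(p)=0$. Equivalently, $\sN$ is the union of the conormal bundles to the symplectic leaves, so both formulations of the lemma follow at once. The main technical point is the step from fibrewise acyclicity of the symbol complex to containment of the characteristic variety of each cohomology module, but this is routine for good filtrations on complexes of coherent $\sD$-modules.
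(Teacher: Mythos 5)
Your argument is correct in substance, but it takes a genuinely different route from the paper. The paper never computes the symbol complex of $\cMpi$: instead it uses the Cartan-type homotopy formula $\lie{\Ham{f}} = \dpi \hook{\dd f} + \hook{\dd f}\dpi$ to show that, for a cocycle $P$ of order $j$, the right action $P\cdot \Ham{f} = P\lie{\Ham{f}}$ agrees modulo $F^j\cMpi$ with $\lie{\Ham{f}}P \equiv \dpi\hook{\dd f}P$, so that the symbol $\Ham{f}$ annihilates the associated graded of $\scoH{\cMpi}$ --- one operator at a time, with no need to identify $\mathrm{gr}^F\dpi$. Your approach instead identifies the full principal symbol of the Lichnerowicz differential as the Koszul differential $\xi\mapsto \pis(p)\wedge\xi$ and deduces the containment from exactness of the symbol complex off the union of conormal bundles; this is a clean, self-contained argument that establishes the bound for all cohomology modules in one stroke. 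What the paper's homotopy method buys is that it generalizes directly to \autoref{thm:modular}, where the operator $\lie{\Mod} = \BV{}\dpi + \dpi\BV{}$ is chain-homotopic to zero via the BV operator but is not visible in the Koszul symbol complex; your method would not produce the extra equation $\Mod$ for $\Char{\X,\pi}$. Two small points to tighten: (i) as written, the order filtration $F^l\cMpi[k] = \Tpol[k]{\X}\otimes F^l\sD{\X}$ is not preserved by the differential (post-composition with the first-order operator $\dpi$ raises the order by one), so you should use the shifted filtration $F^l\cMpi[k] = \Tpol[k]{\X}\otimes F^{l+k}\sD{\X}$ before invoking the spectral sequence of a filtered complex; and (ii) you should note explicitly that the induced filtration on $\scoH[k]{\cMpi}$ is good (its associated graded is a subquotient of the coherent module $\scoH[k]{\mathrm{gr}^F\cMpi}$), since the characteristic variety is computed from a good filtration. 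Both are routine, and with them your proof is complete.
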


\begin{proof}
Let $\hook{\dd f} : \Tpol{\X} \to \Tpol[\bullet -1]{\X}$ by the operator on the Lichnerowicz complex given by contraction with the differential $\dd f$.
We have the homotopy formula $\lie{\Ham{f}} = \dpi \hook{\dd f} + \hook{\dd f} \dpi$ as operators on $\Tpol{\X}$. 

Consider the natural filtration $F^\bullet \cMpi$ by the order of differential operators.  If $P \in F^j\cMpi$ is a cocycle that is an operator of order $j$, and we view the Hamiltonian vector field $\Ham{f} \in \cT{\X}$ as an element of $\sD{\X}$, we have the right action
\begin{align*}
P \cdot \Ham{f} &= P \lie{\Ham{f}} \\
&\equiv \lie{\Ham{f}} P \mod F^{j}\cMpi \\
&\equiv \dpi \hook{df} P \mod F^{j}\cMpi
\end{align*}
It follows that right multiplication by $\Ham{f}$ acts trivially on the associated graded of the cohomology $\scoH{\cMpi}$.  Hence $\Ham{f}$ vanishes on the characteristic variety.

The statement regarding conormal bundles is now immediate: since the span of the Hamiltonian vector fields in the tangent space is exactly the tangent space to the symplectic leaves, a point of $\ctb{\X}$ is annihilated by all Hamiltonians if and only if it lies in the conormal bundle of a symplectic leaf.
\end{proof}

\subsection{Generic nondegeneracy}
\label{sec:gen-symp}

 \autoref{lem:hams} above has the following immediate corollary:
\begin{corollary}\label{cor:symp}
If $(\X,\pi)$ is symplectic (i.e.~$\pi \in \Tpol[2]{\X}$ is nondegenerate), then $\Char{\X,\pi} \subset \ctb{\X}$ is the zero section.  In particular, $(\X,\pi)$ is holonomic.
\end{corollary}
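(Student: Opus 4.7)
The plan is to deduce everything from \autoref{lem:hams} together with the observation that, in the symplectic case, the foliation by symplectic leaves is trivial.

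First I would recall that when $\pi$ is nondegenerate, the contraction map $\pis : \forms[1]{\X} \to \cT{\X}$ is an isomorphism of sheaves. Consequently, every vector field is Hamiltonian locally (locally, any $\xi \in \cT{\X}$ equals $\Ham{f}$ for $f$ a primitive of $(\pis)^{-1}\xi$, which exists on sufficiently small open sets by the Poincar\'e lemma applied to the closed $1$-form $(\pis)^{-1}\xi$; closedness follows from $\lie{\xi}\pi=0$ combined with nondegeneracy). Equivalently, the distribution spanned by Hamiltonian vector fields is the full tangent bundle $\cT{\X}$, so there is a single symplectic leaf, namely $\X$ itself.

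Next I would apply \autoref{lem:hams}: the characteristic variety $\Char{\X,\pi}$ is contained in the union of conormal bundles to symplectic leaves. Since the only leaf is $\X$, this union consists of the conormal bundle of $\X \subset \X$, which is the zero section of $\ctb{\X}$. Hence $\Char{\X,\pi}$ is contained in the zero section. (Alternatively, a covector $\alpha \in \ctb{\X}$ on which every Hamiltonian $\Ham{f}$ vanishes as a linear function must annihilate all of $\cT{\X}$ by the surjectivity of $f \mapsto \Ham{f}$ established above, hence $\alpha = 0$.)

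Finally, to conclude holonomicity I would note that the zero section of $\ctb{\X}$ has dimension $n = \dim \X$ and is obviously isotropic for the canonical symplectic form (which vanishes on it), so it is Lagrangian. Any closed subvariety of a Lagrangian is therefore Lagrangian (or empty), and thus $(\X,\pi)$ is holonomic by definition. There is no real obstacle here; the only thing to be careful about is the local statement that nondegeneracy implies every vector field is locally Hamiltonian, which is a standard consequence of the Darboux theorem.
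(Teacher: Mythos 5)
Your argument is correct and is essentially the paper's own (the corollary is stated there as an immediate consequence of \autoref{lem:hams}): nondegeneracy makes $\X$ the unique symplectic leaf, so $\Char{\X,\pi}$ lies in its conormal bundle, which is the zero section, and the zero section is Lagrangian. One small caveat: your parenthetical claim that \emph{every} vector field is locally Hamiltonian is false (that would require $\lie{\xi}\pi=0$, which a general $\xi$ does not satisfy), but this does not affect the proof --- all you need is that the values $\Ham{f}(p)=\pis(\dd f|_p)$ span $T_p\X$ at each point, which follows directly from nondegeneracy and is exactly what your alternative pointwise argument correctly uses.
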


%

At the opposite extreme, note that if $\pi = 0$, then the differential on $\cMpi$ is identically zero, so that $\scoH[0]{\cMpi}=\sD{\X}$ and $\Char{\X,\pi} = \ctb{\X}$.  Thus the zero Poisson structure is holonomic if and only if $\dim \X = 0$.

In general, holonomicity of a Poisson manifold $(\X,\pi)$ is a sort of nondegeneracy condition on the local singularities of $\pi$.  To see this, recall Weinstein's splitting theorem \cite{Weinstein1983}, which states that the germ $(\X,\pi)_p$ at any point $p \in \X$ is isomorphic to a product of Poisson structures $(\X,\pi)_p = (\W,\pi|_\W)_{p} \times (\X',\pi')_{p'}$, where $\W$ is the symplectic leaf through $p$ and $\pi'$ is a Poisson structure that vanishes at the point $p'$.   The transverse germ $(\X',\pi')_p$ is unique up to isomorphism and called the \defn{transverse Poisson structure at $p$}.  Evidently, the Lichnerowicz complex of the germ $(\X,\pi)_p$ decomposes as a product of the Lichnerowicz complexes of $(\W,\pi|_\W)_p$ and $(\X',\pi')_{p'}$, which easily gives the following:
\begin{lemma}
A Poisson manifold $(\X,\pi)$ is holonomic in a neighbourhood of $p \in \X$ if and only the transverse Poisson structure at $p$ is holonomic.
\end{lemma}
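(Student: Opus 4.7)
The plan is to reduce the question to a product computation and invoke \autoref{cor:symp} for the symplectic factor. By Weinstein's splitting theorem, the germ of $(\X,\pi)$ at $p$ decomposes as $(\W,\pi|_\W)_p \times (\X',\pi')_{p'}$ with $\W$ symplectic, so the first task is to show that the D-module complex $\cMpi$ respects this product.

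Concretely, I would combine the canonical isomorphisms
\[
\Tpol{\W \times \X'} \cong \Tpol{\W} \boxtimes_\CC \Tpol{\X'}, \qquad \sD{\W \times \X'} \cong \sD{\W} \boxtimes_\CC \sD{\X'}
\]
with the derivation identity $\dpi = \dd_{\pi|_\W} \otimes 1 + 1 \otimes \dd_{\pi'}$ on the Schouten algebra (in the standard graded sense) to obtain an isomorphism of complexes of right $\sD{\W \times \X'}$-modules
\[
\cMpi \cong \mathcal{M}^\bullet_{\pi|_\W} \boxtimes \mathcal{M}^\bullet_{\pi'}.
\]
Since each term $\cM[k]_\pi$ is locally free over $\cO{}$, and in particular $\CC$-flat, the K\"unneth theorem then yields
\[
\scoH[k]{\cMpi} \cong \bigoplus_{i+j=k} \scoH[i]{\mathcal{M}^\bullet_{\pi|_\W}} \boxtimes \scoH[j]{\mathcal{M}^\bullet_{\pi'}}.
\]

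Next I would invoke the standard fact from D-module theory that characteristic varieties multiply under external tensor products, $\Char{M \boxtimes N} = \Char M \times \Char N$, inside $\ctb{\W} \times \ctb{\X'} \cong \ctb{\W \times \X'}$. Taking the union over all cohomological degrees gives
\[
\Char{\X,\pi} = \Char{\W,\pi|_\W} \times \Char{\X',\pi'}
\]
in a neighbourhood of $(p,p')$. By \autoref{cor:symp}, the first factor is the zero section of $\ctb{\W}$, which is Lagrangian of dimension $\dim \W$. A product of conic coisotropic subvarieties in $\ctb{\W} \times \ctb{\X'}$ is Lagrangian if and only if both factors are (an elementary dimension count, together with the fact that isotropy is preserved and reflected by products). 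So $\Char{\X,\pi}$ is Lagrangian near $p$ if and only if $\Char{\X',\pi'}$ is Lagrangian near $p'$, which is exactly the desired statement.

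The main obstacle is the K\"unneth step: one must choose good filtrations on the two factor D-module complexes whose external tensor product is again a good filtration on $\cMpi$, and whose associated graded sheaf on $\ctb{\W \times \X'}$ is identified with the external product of the associated gradeds on $\ctb{\W}$ and $\ctb{\X'}$. These facts are standard, but they require careful bookkeeping with the order filtration on $\sD{}$, especially when promoting the product decomposition from the level of complexes to the level of cohomology sheaves and characteristic varieties.
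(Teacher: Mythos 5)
Your argument is correct and follows exactly the route the paper intends: the paper merely asserts that under Weinstein splitting the Lichnerowicz complex of the germ decomposes as a product and calls the conclusion ``evident,'' while you supply the details (the K\"unneth decomposition for the external product of the D-module complexes, multiplicativity of characteristic varieties under external tensor product, and \autoref{cor:symp} to identify the symplectic factor's characteristic variety with the zero section). There is no gap; the filtration bookkeeping you flag is indeed standard.
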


\begin{corollary}
Every holonomic Poisson manifold has an open dense symplectic leaf.
\end{corollary}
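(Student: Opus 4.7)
The plan is to reduce the corollary to the preceding lemma together with the observation, made at the start of this subsection, that the zero Poisson structure on a positive-dimensional manifold is not holonomic. Concretely, it suffices to show that the locus $\U \subset \X$ on which $\pi$ is nondegenerate meets every nonempty open subset of $\X$, since $\U$ is automatically open by the lower semi-continuity of the rank of the bivector.

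First I would fix an open set $V \subset \X$ and choose a point $q \in V$ at which the rank of $\pi$ attains its maximum, say $2k$, on $V$. By semi-continuity the rank is identically $2k$ on some open neighbourhood of $q$. Weinstein's splitting theorem then exhibits the germ of $(\X,\pi)$ at $q$ as a product of a symplectic germ of dimension $2k$ with the transverse Poisson germ $(\X',\pi')_{q'}$, which vanishes at $q'$. Because the rank of $\pi$ is locally constant at $q$, the structure $\pi'$ must vanish identically on some open neighbourhood of $q'$ in $\X'$.

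Next I would apply the preceding lemma: holonomicity of $\X$ near $q$ forces the transverse germ $(\X',\pi')_{q'}$ to be holonomic. But this germ is represented by the zero Poisson structure on an open subset of $\X'$, which is holonomic only when $\dim \X' = 0$. Hence $2k = \dim \X$, so $q \in \U$, proving that $\U$ is dense. Since $\X \setminus \U$ is the vanishing locus of a section of the anticanonical line bundle $\det \cT{\X}$, it is a proper complex analytic subset of $\X$; it therefore has real codimension at least two, so (assuming $\X$ connected, as we may) $\U$ is connected and constitutes a single open dense symplectic leaf.

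There is no real obstacle here: the result is essentially an immediate consequence of the preceding lemma together with the non-holonomicity of the zero Poisson structure in positive dimension, with Weinstein's splitting theorem furnishing the bridge between the local rank of $\pi$ near a point and the transverse Poisson structure at that point.
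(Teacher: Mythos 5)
Your proof is correct and follows essentially the same route as the paper's: both pass to the locus where $\pi$ has locally constant rank, invoke Weinstein's splitting theorem and the preceding lemma to see that the transverse Poisson structure there is zero and hence must be zero-dimensional by holonomicity. Your additional remarks (locating maximal-rank points in every open set, and noting that the degeneracy locus is a proper analytic subset so that $\U$ is a single connected leaf) are fine elaborations of details the paper leaves implicit.
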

\begin{proof}
Every Poisson manifold $(\X,\pi)$ has an open dense set on which it is regular, i.e.~where the symplectic leaves all have the same dimension.  On this open set, the transverse Poisson structure is identically zero, so holonomicity forces the transverse structure to be zero-dimensional.
\end{proof}


\section{Holonomicity and the modular vector field}

\subsection{The modular vector field}

The basic strategy to understand the structure of the complex $\cMpi$ is to look for natural differential operators on $\Tpol{\X}$ that act trivially on the cohomology, and use their symbols to construct equations for $\Char{\X,\pi}$.  The prototypical example is the Lie derivative along Hamiltonian vector fields used in \autoref{lem:hams}.  In this section, we explain the role of another key operator: the modular vector field~\cite{Brylinski1999,Polishchuk1997,Weinstein1997}.  Let us recall its definition and basic properties.

Suppose for the moment that $\X$ is a Calabi--Yau manifold of dimension $n$, i.e.~it admits a global holomorphic volume form, or equivalently, a top degree polyvector field $\mu \in \Gamma(\X,\Tpol[n]{\X})$ that is nonvanishing (a covolume).  Then $\mu$ gives rise to an isomorphism
\[
\Tpol{\X} \cong \forms[n-\bullet]{\X}.
\]
Transporting the exterior derivative by this isomorphism, we obtain an operator $\BV{}$ of degree $-1$ on $\Tpol{\X}$, called the Batalin--Vilkovisky (BV) operator.  It is a derivation of the Schouten bracket:
\begin{align}
\BV{} [\alpha,\beta] = [\BV{}\alpha,\beta] - (-1)^{|\alpha|}[\alpha,\BV{}\beta] \label{eqn:bv-der}
\end{align}
where $|\alpha|$ denotes the degree of a homogeneous element $\alpha$.

The BV operator is a sort of divergence for polyvectors.  In particular, if $\xi$ is a vector field, the function $\BV{} \xi$ is the usual divergence of $\xi$ with respect to $\mu$:
\begin{align}
\lie{\xi}\mu = -(\BV{}\xi) \cdot \mu, \label{eqn:div}
\end{align}
so that $\BV{} \xi=0$ if and only if $\mu$ is invariant under the flow of $\xi$.

Applying the BV operator to a Poisson structure $\pi$, we obtain a vector field
\[
\Mod = \BV{}\pi \in \cT{\X},
\]
called the \defn{modular vector field}.  For a function $f \in \cO{\X}$ and its Hamiltonian vector field $\Ham{f}$, the derivation rule \eqref{eqn:bv-der} gives the identity
\begin{align}
\Mod(f) = -\BV{}\Ham{f} \label{eqn:div-ham}
\end{align}
so that  $\Mod = 0$ if and only if $\mu$ is invariant under all Hamiltonian flows.  One can also show that $\lie{\Mod}\pi = 0$, so that $\Mod$ is an infinitesimal symmetry of $\pi$. 

\begin{example}\label{ex:surf-CY}
Consider the Calabi--Yau surface $\X = \CC^2$  with the standard covolume form $\mu = \cvf{w}\wedge\cvf{z}$.  Suppose $\pi = f \cvf{w}\wedge\cvf{z}$ is a Poisson structure.  One readily computes that the modular vector field is given by
\[
\Mod = \BV{} \pi = (\cvf{w}f)\cvf{z}-(\cvf{z}f)\cvf{w}.
\]
Since $\Mod$ is a symmetry, it must be tangent to the zero locus $\Y \subset \X$ of $\pi$, i.e.~the locus where $f$ vanishes.   Moreover, since $\Mod$ vanishes exactly at the critical points of $f$, the zeros of the restriction $\Mod|_\Y$ are exactly the singular points of $\Y$.  Here ``singular point'' is taken in the scheme-theoretic sense: a singular point is any point where $f$ vanishes to order greater than one.  For example if $f = w^2$, then every point of the line $\{w=0\}$ is singular.  \qed
\end{example}

The modular vector field can be used to give a concrete presentation of the top cohomology of our complex of D-modules:
\begin{proposition}\label{prop:pres}
Let $\pi$ be a Poisson structure on a Calabi-Yau manifold, and let $\Mod = \BV{}\pi$ be its modular vector field. Then we have an isomorphism
\[
\scoH[n]{\cMpi} = \cMpi[n]/\dpi \cMpi[n-1] \cong \sD{\X} / \cI
\]
where $\cI\subset \sD{\X}$ is the right ideal generated by first-order differential operators of the form $\Mod(f) + \Ham{f} \in \cO{\X}\oplus\cT{\X}$ for $f \in\cO{\X}$.
\end{proposition}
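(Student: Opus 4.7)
The plan is to use the covolume $\mu$ to trivialize $\cMpi[n]$ and then compute the image of $\dpi$ explicitly on a convenient generating set. Using the canonical isomorphism $\cMpi[k]\cong\Tpol[k]{\X}\otimes_{\cO{\X}}\sD{\X}$ recalled in the text, together with the trivialization $\Tpol[n]{\X}=\cO{\X}\cdot\mu$, I obtain an identification of right $\sD{\X}$-modules $\cMpi[n]\cong\sD{\X}$ sending $\mu\otimes\phi$ to $\phi$. Since $\dpi$ on $\cMpi[\bullet]$ is defined by post-composition with the Lichnerowicz differential on $\Tpol{\X}$, it is right $\sD{\X}$-linear, so $\dpi\cMpi[n-1]$ coincides with the right $\sD{\X}$-submodule generated by the elements $\dpi(\alpha\otimes 1)$ for $\alpha\in\Tpol[n-1]{\X}$. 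Furthermore, $\omega\mapsto\hook{\omega}\mu$ is an $\cO{\X}$-linear isomorphism $\forms[1]{\X}\cong\Tpol[n-1]{\X}$, and 1-forms are locally $\cO{\X}$-spanned by exact differentials $\dd f$, so the elements $\hook{\dd f}\mu$ suffice to $\cO{\X}$-generate $\Tpol[n-1]{\X}$. It therefore suffices to show that $\dpi(\hook{\dd f}\mu\otimes 1)$ corresponds under this identification to $\Ham{f}+\Mod(f)\in\sD{\X}$ for each local function $f$.

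Unwinding definitions, $\dpi(\hook{\dd f}\mu\otimes 1)$ is the operator $g\mapsto [\pi,g\cdot\hook{\dd f}\mu]$. Expanding by the Leibniz rule for the Schouten bracket gives the two terms $[\pi,g]\wedge\hook{\dd f}\mu$ and $g\cdot [\pi,\hook{\dd f}\mu]$. For the first term, the identity $\xi\wedge\hook{\dd f}\mu=\xi(f)\mu$, valid for any vector field $\xi$ (easily checked in local coordinates), combined with $[\pi,g]=-\Ham{g}$, gives $-\Ham{g}(f)\mu=-\{g,f\}\mu=\Ham{f}(g)\mu$. For the second term, I would invoke the BV--Schouten generator identity expressing the Schouten bracket as the defect of $\BV{}$ being a graded derivation. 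Since $\pi\wedge\hook{\dd f}\mu$ has degree $n+1$ and hence vanishes, and since $\BV{}(\hook{\dd f}\mu)=\pm\BV{}^2(f\mu)=0$ (using $\BV{}\mu=0$, which holds by construction of $\BV{}$ from $\mu$, together with the fact that $\hook{\dd f}\mu$ agrees up to sign with $\BV{}(f\mu)$), the identity collapses to $[\pi,\hook{\dd f}\mu]=\Mod\wedge\hook{\dd f}\mu=\Mod(f)\mu$. Combining, $\dpi(\hook{\dd f}\mu\otimes 1)$ is the operator $g\mapsto(\Ham{f}(g)+g\Mod(f))\mu$, which is precisely the element $\Ham{f}+\Mod(f)\in\sD{\X}$ under our identification.

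Putting these computations together, $\dpi\cMpi[n-1]$ is exactly the right ideal $\cI$ generated by $\{\Ham{f}+\Mod(f):f\in\cO{\X}\}$, yielding the claimed isomorphism $\cMpi[n]/\dpi\cMpi[n-1]\cong\sD{\X}/\cI$. The one non-routine step is the BV--Schouten computation in the middle paragraph; the overall architecture of the argument is essentially forced once one chooses the generators $\hook{\dd f}\mu$, but the sign bookkeeping in the Schouten--$\BV{}$ identities is where errors are easiest to make.
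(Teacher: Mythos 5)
Your proposal is correct and follows essentially the same route as the paper: trivialize $\cMpi[n]\cong\sD{\X}$ via $\mu$, generate $\cMpi[n-1]$ by the operators associated to $\hook{\dd f}\mu$, and identify $\dpi$ of these generators with $\Mod(f)+\Ham{f}$. The only (cosmetic) difference is in the middle computation, where the paper applies the graded Jacobi identity to $[\pi,[f,g\mu]]$ together with \eqref{eqn:div} and \eqref{eqn:div-ham}, while you expand by the Leibniz rule and invoke the BV--Schouten generator identity; both yield the same identity $[\pi,g\hook{\dd f}\mu]=(\lie{\Ham{f}}g+\Mod(f)g)\mu$.
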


\begin{proof}
The covolume $\mu$ gives an isomorphism $\Tpol[n]{\X} \cong \cO{\X}$, and hence $\cMpi[n] \cong \sD{\X}$ as right $\sD{\X}$-modules.  The isomorphism $\sD{\X} \to \cMpi[n]$ sends an element $\phi \in \sD{\X}$ to the operator $\cO{\X} \to \Tpol[n]{\X}$ defined by $g \mapsto \phi(g)\mu$.  We will show that this isomorphism identifies the submodule $\dpi \cMpi[n-1] \subset \cMpi[n]$  with the ideal $\cI\subset \sD{\X}$, by comparing their generators.

To this end, notice that the elements $\hook{\dd f}\mu$ for $f \in \cO{\X}$ generate $\Tpol[n-1]{\X}$ as an $\cO{\X}$-module.  Hence the right $\sD{\X}$-module $\cMpi[n-1]$ is generated by operators of the form $P_f : g \mapsto g\hook{\dd f}\mu = -[f,g\mu]$.  The differential $\dpi P_f \in \cMpi[n]$ therefore acts on $g$ by the formula
\begin{align*}
\dpi P_f g &= -[\pi,[f,g\mu]] \\
&= -[[\pi,f],g\mu] \\
&= \lie{\Ham{f}}(g\mu) \\
&= (\lie{\Ham{f}}g - g\Delta{\Ham{f}})\mu \\
&= (\lie{\Ham{f}}g + \Mod(f) \cdot g)\mu
\end{align*}
where we have used the Jacobi identity for $[-,-]$ and the identities \eqref{eqn:div} and \eqref{eqn:div-ham}.  We conclude that under the isomorphism $\cM[n] \cong \sD{\X}$, the differential $\dpi P_f \in \cM[n]$ corresponds to the generator $\Mod(f) + \Ham{f}$ of the ideal $\cI \subset \cO{\X}$, as desired.
\end{proof}

\subsection{The modular foliation}

Most Poisson manifolds are not Calabi--Yau, so it will typically be impossible to define a global modular vector field.  But any manifold admits covolumes on small enough open sets, and the modular vector fields defined in these local patches can be glued to produce a canonical foliation of $\X$, as follows.

Suppose that $\mu$ and $\mu'$ are covolumes defined on some simply connected open subset of $\X$,
so that $\mu' = g \mu$ for some nonvanishing function $g \in \cO{\X}$.  Then one easily calculates that their modular vector fields differ by a Hamiltonian:
\[
\Mod' = \Mod - \Ham{\log g},
\]
where $\log g$ denotes any branch of the  logarithm of $g$. Hence, the $\cO{\X}$-submodule
\[
\sF = \abrac{\Mod,\Ham{f} \,|\, f \in \cO{\X}} \subset \cT{\X}
\]
is independent of the choice of covolume used to define $\Mod$, and it therefore makes sense globally, as a coherent subsheaf of $\cT{\X}$.  Moreover, $\sF$ is involutive for the Lie bracket, as follows from the $\Mod$-invariance of $\pi$:
\[
[\Mod,\Ham{f}] = \lie{\Mod}(\hook{\dd f}\pi) = \hook{\dd \Mod(f)} \pi = \Ham{\Mod(f)}
\]
See also~\cite[Remark 7]{Gualtieri2013a} for a more global description of $\sF$ using Lie bialgebroids.

The involutive subsheaf $\sF\subset \cT{\X}$ defines a canonical foliation of $\X$, which we call the \defn{modular foliation of $(\X,\pi)$.} Its leaves evidently come in two types:
\begin{itemize}
\item The even-dimensional leaves are described locally as the symplectic leaves to which the modular vector field is tangent.
\item The odd-dimensional leaves are described locally by taking a symplectic leaf $\bL$ to which the modular vector field $\Mod$ is transverse, and considering the orbit of $\bL$ under the flow of $\Mod$.
\end{itemize} 

\begin{example}\label{ex:surf-modfol}
Using  \autoref{ex:surf-intro} and \autoref{ex:surf-CY}, we can describe the modular foliation for an arbitrary Poisson surface $(\X,\pi)$.  Let $\Y\subset \X$ be the zero locus.  Then there is a single two-dimensional leaf, namely the open set $\U = \X\setminus \Y$.  The one-dimensional leaves are the connected components of the smooth locus $\Y \setminus \Y_\sing$.  Finally, the zero-dimensional leaves are the points of the singular locus $\Y_\sing \subset\Y$.  Here $\Y_\sing$ is taken in the scheme-theoretic sense, i.e.~it includes all irreducible components of $\Y$ that occur with multiplicity greater than one.  \qed
\end{example}

We now prove our main holonomicity condition:

\begin{theorem}\label{thm:modular}
\thmmod
\end{theorem}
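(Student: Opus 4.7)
The plan is to extend the symbol-calculus argument of \autoref{lem:hams}, which produces Hamiltonian vector fields as defining equations for $\Char{\X,\pi}$, so that it also produces the modular vector field. The crucial new ingredient is the homotopy identity
\[
\lie{\Mod} = \BV{}\dpi + \dpi\BV{},
\]
valid on any open set on which a covolume $\mu$ has been chosen; it follows from the BV derivation rule \eqref{eqn:bv-der} applied to $\dpi = [\pi,-]$, using $|\pi|=2$. Thus $\lie{\Mod}$ is the graded commutator of the Batalin--Vilkovisky operator with the Lichnerowicz differential, exactly parallel to the relation $\lie{\Ham{f}} = [\hook{\dd f},\dpi]$ used in \autoref{lem:hams}.

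For part~(1), I would rerun the proof of \autoref{lem:hams} with $\Ham{f}$ replaced by $\Mod$ and $\hook{\dd f}$ replaced by $\BV{}$. If $P\in F^{j}\cMpi[k]$ is a cocycle, then the right action $P\cdot\Mod = P\circ\lie{\Mod}$ is congruent modulo $F^{j}\cMpi[k]$ to $\lie{\Mod}\circ P = \dpi(\BV{} P)$, which is a coboundary. Hence $[P\cdot\Mod]$ lies in the image of $F^{j}\cMpi[k]$ in cohomology, so the symbol of $\Mod$ annihilates the associated graded of $\scoH{\cMpi}$, and therefore $\Mod$ vanishes on $\Char{\X,\pi}$. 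Combined with \autoref{lem:hams}, the full distribution $\sF = \abrac{\Mod,\Ham{f}}$ vanishes on $\Char{\X,\pi}$; this is independent of the choice of covolume, since changing $\mu$ alters $\Mod$ only by a Hamiltonian already in $\sF$. Since $\sF$ cuts out exactly the union of conormal bundles of the modular leaves, the first inclusion follows. The ``in particular'' clause is immediate: $\Char{\X,\pi}$ is coisotropic, so each component has dimension at least $n=\dim\X$, while a finite local union of $n$-dimensional conormal bundles has dimension $n$, forcing each component of $\Char{\X,\pi}$ to be $n$-dimensional and hence Lagrangian.

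Part~(2) would use Weinstein's splitting theorem at a point $p$ in an even-dimensional modular leaf $\bL$. Write the germ as $(\bL,\pi_\bL)_p\times(\X',\pi')_{p'}$ with $\pi'(p')=0$. Using the Liouville volume on the symplectic factor, $\Mod$ reduces to the transverse modular vector field $\Mod'$, and the even-dimensional modular-leaf condition forces $\Mod|_{p}$ to be tangent to $\bL$, so $\Mod'|_{p'}=0$. By \autoref{cor:symp} and a K\"unneth-type decomposition of $\cMpi$ under the splitting, $\Char{\X,\pi}$ is locally the product of the zero section of $\ctb{\bL}$ with $\Char{\X',\pi'}$, while the conormal bundle of $\bL$ is the product of the same zero section with the fibre $\ctb{\X'}|_{p'}$. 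It therefore suffices to show that the fibre of $\Char{\X',\pi'}$ at $p'$ is all of $\ctb{\X'}|_{p'}$. By \autoref{prop:pres} applied on a Calabi--Yau patch around $p'$, the top cohomology module is $\sD{\X'}/\cI'$, where $\cI'$ is generated on the right by the first-order operators $\Mod'(f)+\Ham{f}$ for $f\in\cO{\X'}$. Because $\Mod'|_{p'}=0$ and $\pi'|_{p'}=0$, every generator has all of its coefficients (as a differential operator) vanishing at $p'$. A direct PBW computation shows that the set $\m_{p'}\sD{\X'}$ of operators with this property is stable under right multiplication by $\sD{\X'}$, whence $\cI'\subset\m_{p'}\sD{\X'}$. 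Taking top symbols, every element of $\mathrm{gr}(\cI')$ has coefficients in $\m_{p'}$ and hence vanishes on the cotangent fibre $\ctb{\X'}|_{p'}$, giving the required inclusion $\ctb{\X'}|_{p'}\subset\Char{\sD{\X'}/\cI'}\subset\Char{\X',\pi'}$. The finiteness of even-dimensional modular leaves in any compact $K\subset\X$ then follows because the Lagrangian analytic subvariety $\Char{\X,\pi}$ has only finitely many irreducible components over $K$, and the closure of each conormal bundle meeting $K$ is such a component.

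The step I expect to require the most care is the filtration bookkeeping in the modular analogue of \autoref{lem:hams}: since $\BV{}$ is a first-order operator on $\Tpol{\X}$, it a priori raises the order of an element of $F^{j}\cMpi[k]$, so one must check that the cohomology class $[P\cdot\Mod]$ really drops into the $F^{j}$-piece rather than merely sitting in $F^{j+1}$. Once this is set up correctly, the rest is formal manipulation together with the K\"unneth reduction to the transverse Poisson structure.
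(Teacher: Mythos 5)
Your proposal is correct and follows essentially the same route as the paper: part (1) via the homotopy formula $\lie{\Mod}=\BV{}\dpi+\dpi\BV{}$ combined with the symbol argument of \autoref{lem:hams}, and part (2) via Weinstein splitting together with the containment $\cI\subset\m\sD{\X}$ deduced from the presentation in \autoref{prop:pres}. The filtration issue you flag does resolve: although $\BV{}P$ has order $j+1$, the term $\dpi(\BV{}P)$ is exact and hence vanishes in cohomology outright, so $[P\cdot\Mod]$ indeed lands in the $F^{j}$ piece of the induced filtration.
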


\begin{proof} Both problems are local so we may assume that $\X$ is Calabi--Yau with modular vector field $\Mod$.  For the first statement, it is enough to show that the local generators of $\sF$ annihilate $\Char{\X,\pi}$.  Since the Hamiltonian vector fields were dealt with in \autoref{lem:hams}, it remains to show that $\Mod$ also annihilates $\Char{\X,\pi}$.  But the derivation rule \eqref{eqn:bv-der} gives the homotopy formula
\begin{align*}
\BV{}\dpi + \dpi \BV{} &= \BV{}[\pi,-] + [\pi,\BV{}-] \\
&= [\BV{}\pi,-] \\
&= \lie{\Mod}.
\end{align*}
and hence the result follows by the same argument as in \autoref{lem:hams}.  

For the second statement, suppose that $\bL \subset \X$ is an even-dimensional modular leaf (a symplectic leaf to which $\Mod$ is tangent).   Using Weinstein's splitting theorem as in \autoref{sec:nondegen}, we may reduce to the case in which $\bL = \{p\}$ is a point where both $\pi$ and $\Mod$ vanish.  To see that the characteristic variety contains the conormal space $\ctb_p{\X}$, it is enough to show that the module $\scoH[n]{\cMpi}=\cMpi[n]/\dpi\cMpi[n-1]$ has a nontrivial quotient supported at $p$.  Indeed, suppose that $\m \subset \cO{\X}$ is the ideal of functions vanishing at $p$. Then we have $\Mod(f) \in \m$ and $\Ham{f} \in \m \cT{\X}$ for any $f \in \cO{\X}$.  Hence $\Mod(f)+\Ham{f} \in \m\sD{\X}$, so that the ideal $\cI \subset \sD{\X}$ of \autoref{prop:pres} is contained in $\m\sD{\X}$, and we have a surjection
\[
\cMpi[n]/\dpi \cMpi[n-1] \cong \sD{\X}/\cI \to \sD{\X}/\m\sD{\X}
\]
But $\sD{\X}/\m\sD{\X}$ is the ``delta-function'' D-module at $p$; in particular, it is nonzero.   We conclude that the conormal bundle to $\bL$ lies in $\Char{\X,\pi}$. 
\end{proof}

\begin{remark}
The second statement in \autoref{thm:modular} can be strengthened: one can show that any closed even-dimensional modular leaf $\bL \subset \X$ supports a canonical nontrivial quotient of $\cMpi[n]/\dpi \cMpi[n-1]$.  It corresponds to the local system on $\bL$ given by the top-degree cohomology of the transverse  Poisson structure. \qed
\end{remark}

\subsection{Holonomicity and log symplectic forms}
\label{sec:nondegen}

Suppose that $(\X,\pi)$ is a Poisson manifold whose dimension $n = \dim \X$ is even.  Then $\pi$ has an open dense symplectic leaf $\U \subset \X$ if and only if the Pfaffian $\pi^{n/2} \in \Gamma(\X,\det{\cT{\X}})$ is a nonzero section of the anticanonical line bundle.  If $\U \ne \X$, its complement is the \defn{degeneracy hypersurface} $\Y\subset \X$ defined by the vanishing of $\pi^{n/2}$.  We say that $(\X,\pi)$ is \defn{log symplectic} if $\Y$ is reduced.  (See \cite{Goto2002} for the original definition in terms of logarithmic differential forms and \cite[Lemma 2.6]{Pym2017} for the equivalence with definition given here.)

Applying  \autoref{thm:modular} to the description of the modular foliation of surfaces in \autoref{ex:surf-modfol}, we obtain a geometric characterization of holonomic surfaces:
\begin{corollary}\label{cor:surf-red}
A Poisson surface is holonomic if and only if it is log symplectic.
\end{corollary}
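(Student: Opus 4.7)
The plan is to combine the explicit description of the modular foliation on surfaces (Example \ref{ex:surf-modfol}) with both parts of Theorem \ref{thm:modular}. On a Poisson surface, the modular leaves come in three types indexed by the geometry of the degeneracy curve $\Y$: the open symplectic leaf $\U = \X\setminus\Y$ in dimension two, the smooth one-dimensional strata of $\Y\setminus\Y_\sing$, and the individual points of the scheme-theoretic singular locus $\Y_\sing$. Reducedness of $\Y$ is exactly the condition that controls local finiteness of this stratification, so each direction of the corollary reduces to an elementary check.

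For the forward implication, assume $(\X,\pi)$ is log symplectic, i.e.~$\Y$ is reduced. Then $\Y$ is a reduced plane curve, so $\Y_\sing$ is a discrete set of isolated points, and $\Y\setminus\Y_\sing$ is a smooth one-dimensional complex manifold whose connected components are locally finite near any point. Consequently, every point of $\X$ has a neighbourhood in which the modular foliation has finitely many leaves, and part (1) of Theorem \ref{thm:modular} immediately yields that $(\X,\pi)$ is holonomic.

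For the converse, suppose $\Y$ is not reduced, so that some irreducible component $\bS\subset\Y$ appears with multiplicity strictly greater than one. By the parenthetical clarification at the end of Example \ref{ex:surf-modfol}, every point of $\bS$ then belongs to the scheme-theoretic singular locus $\Y_\sing$ and therefore constitutes a zero-dimensional—hence even-dimensional—modular leaf. Any compact neighbourhood of a point of $\bS$ contains infinitely many such leaves. Part (2) of Theorem \ref{thm:modular}, however, asserts that a holonomic Poisson manifold admits at most finitely many even-dimensional modular leaves in any compact subset; this contradiction shows that $(\X,\pi)$ is not holonomic.

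No step presents a real obstacle here: once Example \ref{ex:surf-modfol} and Theorem \ref{thm:modular} are in hand, the corollary is a direct bookkeeping of leaf dimensions in the two cases. The only subtle point, which is already absorbed into Example \ref{ex:surf-modfol}, is the correct scheme-theoretic reading of $\Y_\sing$ (so that a non-reduced component contributes a whole curve's worth of zero-dimensional leaves); the naive set-theoretic reading would miss precisely this and cause the converse direction to fail.
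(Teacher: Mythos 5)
Your argument is correct and is precisely the one the paper intends: the paper derives \autoref{cor:surf-red} in one line by combining \autoref{thm:modular} with the description of the modular foliation in \autoref{ex:surf-modfol}, and you have simply spelled out the two directions (local finiteness of leaves via part~(1) when $\Y$ is reduced, and infinitely many zero-dimensional leaves in a compact set via part~(2) when it is not). Your emphasis on the scheme-theoretic reading of $\Y_\sing$ is exactly the point that makes the converse work.
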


In higher dimension, the relation between these notions is weaker:
\begin{proposition}\label{p:hol-ls}
Every holonomic Poisson manifold is log symplectic.  Conversely, every log symplectic manifold is holonomic away from the singular locus of its degeneracy hypersurface.
\end{proposition}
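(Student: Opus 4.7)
The plan is to reduce both directions to the surface case (\autoref{cor:surf-red}) via Weinstein's splitting theorem, together with the earlier lemma that $(\X,\pi)$ is holonomic near $p$ if and only if its transverse Poisson structure at $p$ is holonomic.

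For the second direction, take $p \in \X \setminus \Y_\sing$. Either $p \in \U$, so the structure is symplectic and \autoref{cor:symp} applies, or $p$ is a smooth point of $\Y$. In the latter case log-symplecticity forces the rank of $\pi$ to equal exactly $n-2$ at $p$, so Weinstein splitting yields a $2$-dimensional transverse Poisson germ $\pi'' = f \cvf{w}\wedge\cvf{z}$ whose defining function $f$ has reduced zero locus. By \autoref{cor:surf-red} the transverse is holonomic, and hence so is $(\X,\pi)$ near $p$.

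For the first direction I argue the contrapositive: suppose $\Y$ has an irreducible component $Z$ of multiplicity $k \geq 2$. In the generic-rank case, at a typical point $p \in Z$ the rank of $\pi$ is $n-2$, and Weinstein splitting produces a $2$-dimensional transverse $\pi'' = f\cvf{w}\wedge\cvf{z}$ whose defining function vanishes to order $k \geq 2$ along its zero curve (the multiplicity being read off from how $\pi^{n/2}$ factors as a wedge of the symplectic and transverse Pfaffians). By \autoref{cor:surf-red} this surface is not holonomic, and so $(\X,\pi)$ fails to be holonomic at $p$.

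The delicate remaining possibility is that the rank of $\pi$ at generic points of $Z$ drops by more than $2$, so the Weinstein transverse $(\X',\pi')$ is of dimension $m \geq 4$ with $\pi'$ vanishing identically along the positive-dimensional subvariety $Z' \subset \X'$ corresponding to $Z$. The plan here is to iterate: at a carefully chosen secondary stratum one performs a further Weinstein splitting to reach a $2$-dimensional Poisson factor in which the multiplicity-$k$ non-reducedness of $Z$ remains visible, and then invokes \autoref{cor:surf-red}. The principal obstacle is precisely this higher-corank case: because $\pi'$ vanishes throughout $Z'$, the rank of $\pi'$ at a typical point of $Z'$ is zero rather than $m-2$, so a naive iteration of the splitting along $Z'$ is trivial. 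The key technical input required is a careful bookkeeping of how $\pi^{n/2}$ factorizes along the symplectic stratification, ensuring that the multiplicity $k$ can be tracked into some $2$-dimensional transverse slice to which \autoref{cor:surf-red} applies.
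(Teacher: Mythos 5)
Your treatment of the second direction, and of the generic-corank-two case of the first direction, agrees with the paper's proof: both reduce to the surface criterion of \autoref{cor:surf-red} via Weinstein splitting. But the ``delicate remaining possibility'' you flag---a component $Z$ of $\Y$ along which the generic corank of $\pi$ is at least four---is a genuine gap, and the repair you sketch will not close it. As you yourself observe, the transverse Poisson structure $(\X',\pi')$ at a generic point of $Z$ then vanishes \emph{identically} on the hypersurface $Z'\subset\X'$ corresponding to $Z$, so there is no further symplectic stratification of $Z'$ to split along; bookkeeping of how $\pi^{n/2}$ factorizes only re-confirms that $Z$ is non-reduced, and does not by itself produce a two-dimensional slice or an obstruction to holonomicity. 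The obstruction in this regime is simply not visible in the ranks of $\pi$ alone.

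The paper closes exactly this case with a different tool: the modular vector field. Applying \autoref{thm:modular}(2) to $(\X',\pi')$, if the modular vector field of $\pi'$ vanished identically on $Z'$, then every point of $Z'$ would be a zero-dimensional (hence even-dimensional) modular leaf, giving infinitely many such leaves in a compact set and hence non-holonomicity. Otherwise the modular vector field is somewhere nonvanishing on $Z'$, and \autoref{lem:vanish-surf} forces $\pi'$ to have generic rank two; since $\dim \X'\ge 4$ this means $\pi'$ has no open dense symplectic leaf, again contradicting holonomicity. Either way the higher-corank case cannot occur for a holonomic structure, so along the locus where $\pi$ has locally constant rank the transverse structure is automatically two-dimensional and \autoref{cor:surf-red} finishes the argument. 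To complete your proof you would need to import \autoref{thm:modular}(2) and \autoref{lem:vanish-surf} (or some equivalent non-rank-based obstruction) at precisely this point.
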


\begin{proof}
For the first statement, it is enough to check reducedness of the degeneracy hypersurface $\Y$ on the open dense subset $\Y_0 \subset \Y$ where $\pi$ has locally constant rank.  By Weinstein's splitting theorem, the transverse Poisson structure $(\X',\pi')$ along $\Y_0$ must vanish on a hypersurface $\Y'\subset \X'$, and by \autoref{thm:modular} the modular vector field cannot vanish identically on $\Y'$. Hence by \autoref{lem:vanish-surf} below, the transverse Poisson structure is two-dimensional.  The statement therefore follows from the surface case (\autoref{cor:surf-red} above).

For the second statement, note that at any smooth point of $\Y$, the symplectic leaf must have codimension two, so that the transverse structure is two-dimensional and the statement follows from the surface case once again.
\end{proof}

\begin{lemma}\label{lem:vanish-surf}
Suppose that a Poisson structure $\pi$ vanishes identically on a hypersurface $\Y \subset \X$, where its modular vector field is locally nonvanishing.  Then $\pi$ generically has rank two.
\end{lemma}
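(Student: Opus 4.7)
The plan is to localize at a smooth point of $\Y$ where $\Mod$ is nonzero, use the Jacobi identity to extract a wedge relation, and combine exactness of the Koszul complex with an inductive Taylor-expansion argument to conclude that $\pi$ is locally decomposable of rank two.

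First I would fix a point $p\in\Y$ with $\Mod(p)\ne 0$ and choose local coordinates $(y,z_1,\ldots,z_{n-1})$ in which $\Y=\{y=0\}$, together with the local covolume $\mu = \partial_y\wedge\partial_{z_1}\wedge\cdots\wedge\partial_{z_{n-1}}$.  Since $\pi|_\Y=0$, we may write $\pi = y\pi_0$ locally, and a direct computation with the BV operator gives $\Mod = \pi_0^\sharp(\dd y) + y\,\BV{}\pi_0$, so $\Mod|_\Y = \pi_0^\sharp(\dd y)|_\Y$.  Expanding $[\pi,\pi] = [y\pi_0,y\pi_0] = 0$ in powers of $y$ and dividing by $y$ yields the trivector identity
\[
\pi_0^\sharp(\dd y)\wedge\pi_0 = -\tfrac{y}{2}[\pi_0,\pi_0],
\]
which restricts on $\Y$ to $\Mod|_\Y\wedge\pi_0|_\Y = 0$.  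At any $q\in\Y$ with $\Mod(q)\ne 0$, exactness of the Koszul complex $T_q\X\xrightarrow{\wedge\Mod(q)}\Lambda^2 T_q\X\xrightarrow{\wedge\Mod(q)}\Lambda^3 T_q\X$ forces $\pi_0(q) = \Mod(q)\wedge v$ for some $v\in T_q\X$, so $\pi_0|_\Y$ is decomposable and of rank exactly two on a dense open subset of $\Y$.

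To upgrade this to a bound on the generic rank of $\pi$ on $\X$, I would refine the coordinates so that $\Mod(p)=\partial_{z_1}$ and $\pi_0(p)=\partial_y\wedge\partial_{z_1}$, and Taylor expand $\pi_0 = \sum_{k\ge 0}y^k\pi_0^{(k)}$.  Substituting into the key identity above and equating coefficients of $y^k$, an inductive Koszul-type argument at the leading vector $\partial_{z_1}$ forces the ``$\partial_{z_1}$-transverse'' component of each $\pi_0^{(k)}$ to vanish, so that locally $\pi_0 = \partial_{z_1}\wedge\xi$ for some vector field $\xi$.  Then $\pi = y\,\partial_{z_1}\wedge\xi$ is decomposable of rank at most two in a neighbourhood of $p$, which gives the desired conclusion.

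The main obstacle is this last inductive step: the Schouten bracket $[\pi_0,\pi_0]$ on the right-hand side of the key identity couples different Taylor orders of $\pi_0$, so one must carefully separate the $\partial_{z_1}$-parallel and $\partial_{z_1}$-transverse components at each stage and verify that the Jacobi identity genuinely propagates decomposability beyond the leading order, rather than allowing higher-rank corrections to arise.
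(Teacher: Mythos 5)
There is a genuine gap, and it is exactly the step you flag as ``the main obstacle'': your argument only controls $\pi$ to leading order in $y$. Writing $\pi = y\pi_0$ and extracting $\pi_0^\sharp(\dd y)\wedge \pi_0 = -\tfrac{y}{2}[\pi_0,\pi_0]$ is correct, and the Koszul argument does show that $\pi_0|_\Y$ is decomposable where $\Mod|_\Y \ne 0$. But the lemma asserts that $\pi$ has rank two \emph{generically on $\X$}, i.e.\ at points off the divisor, and decomposability of $\pi_0$ \emph{restricted to} $\{y=0\}$ says nothing about the rank of $\pi$ for $y \ne 0$ unless you can propagate the constraint to all orders of the Taylor expansion in $y$. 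Your proposed induction is not set up to do this: the identity you would feed it degenerates at order $y^k$ into a relation coupling $\pi_0^{(k)}$ to Schouten brackets of all lower-order terms (with $z$-derivatives), and wedging against the single vector $\cvf{z_1} = \Mod(p)$ no longer isolates the ``transverse'' component of $\pi_0^{(k)}$ once the right-hand side is nonzero. As written, nothing rules out higher-rank corrections at order $y^k$ for $k \ge 1$, so the conclusion $\pi_0 = \cvf{z_1}\wedge\xi$ is unproven.

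The paper closes this gap by a different use of the modular vector field that captures the Jacobi identity to all orders at once. Choose $w$ with $(\lie{\Mod}w)(p)\ne 0$; since $\Ham{w}$ vanishes on $\Y$ one has $\Ham{w} = y\eta$, and the divergence identity $\Mod(w) = -\BV{}\Ham{w} = -y\BV{}\eta + \lie{\eta}y$ forces $(\lie{\eta}y)(p)\ne 0$, i.e.\ $\eta$ is \emph{transverse} to $\Y$ (note: your factor of $\Mod$, tangent to $\Y$, plays no such role). Completing $(w,y)$ to coordinates $(w,y,x_1,\dots,x_{n-2})$ adapted to the flow of $\eta$ gives $\Ham{w} = g\cvf{y}$, and then $\lie{\Ham{w}}\{x_i,x_j\} = 0$ shows each $\{x_i,x_j\}$ is independent of $y$; since it vanishes at $y=0$ it vanishes identically, leaving $\pi = \cvf{y}\wedge\Ham{y}$. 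If you want to salvage your approach, you should replace the order-by-order induction with an argument of this type that produces a Hamiltonian flow transverse to $\Y$ along which the remaining brackets are rigid.
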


\begin{proof}
The problem is local, so we may assume that $\X$ is Calabi-Yau with modular vector field $\Mod$.  Let $p \in \Y_\red$ be a smooth point of the underlying reduced hypersurface $\Y_\red \subset \Y$ and assume that $\Mod$ is nonzero at $p$.

Let $y$ be a defining equation for $\Y_\red$ near $p$, and let $w$ be a function such that $(\lie{\Mod}w)(p) \ne 0$.  Since the Hamiltonian vector field $\Ham{w}$ vanishes on $\Y$ we have $\Ham{w} = y \eta$ for some vector field $\eta$.  Moreover
\[
0 \ne (\lie{\Mod}w)(p) = (-\BV{}\Ham{w})(p) = - (y\BV{}\eta)(p) +(\lie{\eta}y)(p),
\]
so that $(\lie{\eta}y)(p) \ne 0$.  Hence $\eta$ generates a nonsingular foliation by curves transverse to the hypersurface $\Y$.  We may therefore complete the pair of functions $(w,y)$ to a coordinate system $(w,y,x_1,\ldots,x_{n-2})$ such that $\lie{\eta}x_i = 0$ for all $i$.  Put differently, we have $\Ham{w} = g\cvf{y}$ for some function $g$.  We then have
\[
\lie{\Ham{w}}\{x_i,x_j\} = \{\lie{\Ham{w}}x_i,x_j\} + \{x_i,\lie{\Ham{w}}x_j\} = 0
\]
for all $i$ and $j$, and hence $\{x_i,x_j\}$ must be independent of $y$.  But $\{x_i,x_j\}$ vanishes when $y=0$, so we must have $\{x_i,x_j\} = 0$ identically.  Hence the only nontrivial Poisson brackets of the coordinate functions are the ones involving $y$, i.e.~we have $\pi = \cvf{y} \wedge \Ham{y}$, which generically has rank two.
\end{proof}

In the singular locus of $\Y$, reducedness is no longer a sufficient condition for holonomicity.  For example, take $\X = \CC^n$ with coordinates $x_1,\ldots,x_n$ and consider a Poisson structure of the form
\[
\pi = \sum_{i ,j} \lambda_{ij} (x_i\cvf{x_i}) \wedge (x_j\cvf{x_j})
\]
where $\lambda = (\lambda_{ij})$ is a nondegenerate skew-symmetric matrix of constants.  For any such matrix, the degeneracy divisor $\Y$ is the union of the coordinate hyperplanes $\{x_i = 0\}$, a reduced normal crossings divisor.  But holonomicity puts a further genericity condition on $\lambda$. For example the ``general position'' and ``P-normal'' conditions of \cite{Ran2017a,Ran2017b} will guarantee holonomicity.  On the other hand \cite[Example 3.7]{Caine2017} gives an example for $\X=\CC^4$ where the Lichnerowicz cohomology is infinite-dimensional; the failure of holonomicity can be explained by the fact that the modular foliation has infinitely many zero-dimensional leaves.


\section{Perverse sheaves and applications}
\label{sec:compfact}

\subsection{Perversity of the Lichnerowicz complex}
\label{sec:perv}

For a general Poisson manifold, the full complex $\cMpi$ of $\sD{\X}$-modules will be required in order to encode the Lichnerowicz complex.  But in the holonomic case it reduces to its top cohomology module, which we described in \autoref{prop:pres}:
\begin{proposition}\label{prop:acyclic}
If $(\X,\pi)$ is a holonomic Poisson manifold of dimension $n$, then $\cMpi$ is exact in degrees $0,\ldots,n-1$, so that we have a quasi-isomorphism
\[
\cMpi{}[n] \cong \scoH[n]{\cMpi} = \cMpi[n]/\dpi \cMpi[n-1].
\]
Here $[n]$ denotes a shift in degree by $n$, so that $(\cMpi{}[n])^j = \cMpi[j+n]$, and the right hand side is a single module, viewed as a complex concentrated in degree zero.
\end{proposition}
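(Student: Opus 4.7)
The plan is to prove acyclicity of $\cMpi$ in degrees $0,\ldots,n-1$ by combining generic information on the open symplectic leaf with the rigidity imposed by the holonomicity hypothesis. First I would restrict to the symplectic locus $\U = \X \setminus \Y$, where $\pi$ is nondegenerate and the anchor map $\pis$ induces an isomorphism of complexes $\forms[\bullet]{\U} \xrightarrow{\sim} \Tpol[\bullet]{\U}$ (extended wedge-wise from $\forms[1]{\U}$) intertwining the de Rham and Lichnerowicz differentials. Tensoring with $\sD{\U}$, this identifies $\cMpi|_\U$ with the de Rham complex $\forms[\bullet]{\U} \otimes \sD{\U}$, which is the classical Spencer resolution of the right $\sD{\U}$-module $\omega_\U$. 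In particular, $\scoH[k]{\cMpi}|_\U = 0$ for $k < n$, so the desired vanishing holds generically.

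Next, for $k < n$, the cohomology sheaf $\scoH[k]{\cMpi}$ is a coherent right $\sD{\X}$-module supported on $\Y$, and by the holonomicity hypothesis it is moreover holonomic. To rule out such modules, I would appeal to the spectral sequence induced by the order filtration on the $\sD{\X}$-factor of $\cMpi$: its associated graded complex is the pushforward from $\ctb{\X}$ of the Koszul-type complex $(p^*\Tpol[\bullet]{\X}, \pis(\theta) \wedge -)$, where $\theta$ is the tautological one-form on $\ctb{\X}$ and $p:\ctb{\X}\to\X$ is the projection. This spectral sequence converges to the cohomology of $\cMpi$, with $E_1$ page given by the Koszul cohomology for the anchor section $\pis(\theta)$ of $p^*\cT{\X}$; this computation refines \autoref{lem:hams}.

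The main obstacle will be to show that the abutment collapses onto top degree. On the $E_1$ page, the Koszul cohomology is supported on the full zero locus $\Sigma = \{\pis(\theta)=0\} \subset \ctb{\X}$, which is typically strictly larger than $\Char{\X,\pi}$ whenever the symplectic foliation has positive-dimensional families of leaves in positive codimension. Holonomicity, which forces $\Char{\X,\pi}$ to be Lagrangian, is precisely the input needed to ensure that the higher differentials $d_r$ kill all contributions outside $\Char{\X,\pi}$ and that the surviving piece concentrates in complex degree $n$. A conceptually cleaner alternative route is to establish a self-duality of $\cMpi$ arising from the Batalin--Vilkovisky operator of a local covolume together with the Calabi--Yau pairing $\Tpol[k]{\X}\otimes\Tpol[n-k]{\X}\to\Tpol[n]{\X}$ on polyvectors; combined with the concentration on $\U$ from the first step and the characteristic variety bounds of \autoref{lem:hams} and \autoref{thm:modular}, such a self-duality would verify both the support and cosupport conditions needed for $\cMpi[n]$ to be perverse, from which concentration in a single cohomological degree would follow immediately.
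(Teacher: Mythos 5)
Your first step is correct (and consistent with the paper): over $\U$ the anchor identifies $\cMpi|_\U$ with the Spencer-type resolution coming from the de Rham complex, so $\scoH[k]{\cMpi}$ is supported on $\Y$ for $k<n$, and the holonomicity hypothesis makes each of these modules holonomic. But that is where the real work begins, and neither of your two routes closes the gap: a nonzero holonomic module supported on $\Y$ is a perfectly consistent object, so ``holonomic and supported on $\Y$'' rules out nothing by itself. In Route A, the order-filtration spectral sequence does recover \autoref{lem:hams} on the $E_1$ page, but your key sentence --- that holonomicity ``is precisely the input needed'' to make the higher differentials concentrate everything in degree $n$ --- asserts the conclusion rather than proving it. Holonomicity tells you that the $E_\infty$ page is supported on a Lagrangian; it gives no control over the differentials $d_r$ and no reason for the Lagrangian-supported classes in degrees $k<n$ to die. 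In Route B, the self-duality of $\cMpi$ is a substantial claim that you neither prove nor reference, and it is delicate: since $\dpi$ is a derivation of the wedge product, the failure of the pairing $\Tpol[k]{\X}\otimes\Tpol[n-k-1]{\X}\to\Tpol[n-1]{\X}$ to be a chain map is governed by $\dpi$ on $\Tpol[n-1]{\X}$, which by the computation in \autoref{prop:pres} is exactly where the modular vector field $\Mod(f)+\Ham{f}$ enters; the introduction of the paper explicitly cautions that the natural homological counterpart of this complex is \emph{not} its dual in general. So as written this is a sketch of two strategies, each stalled at the decisive step.

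The mechanism the paper actually uses --- and the idea missing from your proposal --- is the homological purity (grade) of holonomic modules. Let $k$ be minimal with $\sH=\scoH[k]{\cMpi}\neq 0$. Then $\cMpi[\le k]$ is a length-$k$ projective resolution of $\sN=\cMpi[k]/\dpi\cMpi[k-1]$, and $\sH$ embeds in $\sN$, so $\sHom{\sH,\sN}\neq 0$. On the other hand, Roos's theorem gives $\sExt[i]{\sH,\cMpi[j]}=0$ for $i<\codim\Char{\sH}=n$ because the terms are projective, and feeding this into the resolution of $\sN$ yields $\sExt[l]{\sH,\sN}=0$ for $l<n-k$; compatibility with $\sExt[0]{\sH,\sN}\neq 0$ forces $k\ge n$. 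It is this vanishing theorem for $\sExt$ against $\sD{\X}$ --- not the geometry of the characteristic variety alone, and not a duality of the complex --- that converts ``Lagrangian characteristic variety'' into ``no cohomology below degree $n$''. If you want to salvage your approach, this is the ingredient to add.
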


\begin{proof}
Let $k \ge 0$ be the smallest integer such that $\cMpi$ is exact in degrees less than $k$, but the cohomology in degree $k$ is nonzero.  We must show that $k = n$.

To begin, we observe that the complex
\[
\cMpi[\le k] = (\xymatrix{ \cMpi[0] \ar[r] & \cMpi[1]\ar[r] &  \cdots \ar[r] & \cMpi[k]})
\]
is a projective resolution of the module $\sN = \cMpi[k]/\dpi \cMpi[k-1]$.  In other words, we have a quasi-isomorphism $\sN \cong \cMpi[\le k][k]$.

Let $\sH = \scoH[k]{\cMpi}$ be the $k$th cohomology module.  Evidently, we have an inclusion $\sH \subset \sN$.  In particular, $\sHom{\sH,\sN} \ne 0$.  On the other hand, we have a spectral sequence whose first page is
\[
\sExt[i]{\sH,\cMpi[j+k]} \implies \sExt[i+j]{\sH,\sN}.
\]
Since the modules $\cMpi$ are projective, a theorem of Roos~\cite{Roos1972}  implies the vanishing $\sExt[i]{\sH,\cMpi[j+k]} = 0$ whenever $i < \codim \Char{\sH}$ (see also~\cite[Theorem D.4.3]{Hotta2008}).  This module also vanishes when $j < -k$, since  $\cMpi$ is concentrated in nonnegative degrees.  It follows that $\sExt[l]{\sH,\sN} = 0$ whenever $l < \codim \Char{\sH}-k$.  Since $\sHom{\sH,\sN} = \sExt[0]{\sH,\sN} \ne 0$, we conclude that $\codim \Char{\sH} \le k$.  But the complex is holonomic, so $\codim \Char{\sH} = n$.  Therefore $k = n$, as desired.
\end{proof}

\begin{remark}
The proof shows more generally that any complex of locally free $\sD{\X}$-modules of the form $\cM = (\cM[0]\to\cdots \to \cM[\dim \X])$ with holonomic cohomology is quasi-isomorphic to its top cohomology.  This is a noncommutative analogue of the ``acyclicity lemma'' in algebraic geometry, e.g.~\cite[Lemma 3]{Buchsbaum1973}. \qed
\end{remark}

This simplification has an important consequence for the Lichnerowicz complex.  Recall that given any complex $\cM$ of right $\sD{\X}$-modules, we may form its ``de Rham complex'' by taking the derived tensor product $\cM \otimes_{\sD{\X}}^\mathbb{L} \cO{\X}$.  The result only depends on $\cM$ up to quasi-isomorphism.  A deep theorem in analysis, due to Kashiwara~\cite{Kashiwara1974/75}, implies that when $\cM$ is a single holonomic $\sD{\X}$-module concentrated in degree zero, its de Rham complex is a \defn{perverse sheaf}.  Combined with \autoref{prop:pres}, it gives the following

\begin{corollary}\label{thm:perverse}
If $(\X,\pi)$ is a holonomic Poisson manifold, then the shifted Lichnerowicz complex $(\Tpol{\X}[n],\dpi)$ is a perverse sheaf.
\end{corollary}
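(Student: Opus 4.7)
The plan is to combine \autoref{prop:acyclic} with Kashiwara's classical theorem asserting that the de Rham complex of a holonomic $\sD$-module is a perverse sheaf; the only extra work is identifying the shifted Lichnerowicz complex with such a de Rham complex and matching shift conventions.

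First, as noted in \autoref{sec:holonomic}, the underived tensor product $\cMpi \otimes_{\sD{\X}} \cO{\X}$ reproduces the Lichnerowicz complex $(\Tpol{\X},\dpi)$ term by term. Because each $\cMpi[k] \cong \Tpol[k]{\X}\otimes_{\cO{\X}} \sD{\X}$ is locally free, hence flat, as a right $\sD{\X}$-module, this underived tensor product coincides with the derived one, giving a quasi-isomorphism
\[
(\Tpol{\X},\dpi) \simeq \cMpi \otimes_{\sD{\X}}^{\mathbb{L}} \cO{\X}.
\]

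Next, the holonomicity hypothesis together with \autoref{prop:acyclic} yields a quasi-isomorphism $\cMpi \simeq \sH[-n]$, where $\sH := \scoH[n]{\cMpi}$ is a single holonomic right $\sD{\X}$-module placed in cohomological degree $n$. Substituting into the previous display and then shifting by $[n]$ gives
\[
(\Tpol{\X}[n],\dpi) \simeq \sH \otimes_{\sD{\X}}^{\mathbb{L}} \cO{\X},
\]
which is the de Rham complex of the holonomic module $\sH$, naturally concentrated in cohomological degrees $-n,\ldots,0$.

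Finally, Kashiwara's constructibility theorem \cite{Kashiwara1974/75} asserts precisely that the de Rham complex of a holonomic $\sD{\X}$-module, with this normalization, is a perverse sheaf, completing the argument. There is no genuine obstacle here: the substantive input has already been supplied by \autoref{prop:acyclic}, and the remainder is a cosmetic identification together with an appeal to a well-known classical theorem.
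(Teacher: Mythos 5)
Your argument is correct and follows essentially the same route as the paper: identify $(\Tpol{\X},\dpi)$ with the de Rham complex $\cMpi\otimes^{\mathbb{L}}_{\sD{\X}}\cO{\X}$, use \autoref{prop:acyclic} to replace $\cMpi$ by its single holonomic top cohomology module, and invoke Kashiwara's constructibility theorem. Your explicit remark that the terms $\cMpi[k]\cong\Tpol[k]{\X}\otimes_{\cO{\X}}\sD{\X}$ are locally free, so the underived tensor product already computes the derived one, is a point the paper leaves implicit, but it is the same proof.
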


We recall that perverse sheaves are, in particular, complexes of sheaves with finite-dimensional stalks that are locally constant along the strata of a Whitney stratification of $\X$.  Moreover, there are tight constraints on  the supports of the cohomology sheaves in terms of their degrees.  Perverse sheaves form an \emph{abelian} category whose simple objects  are intersection cohomology complexes of local systems supported on subvarieties of $\X$, and in general any object in the category can be built as iterated extensions of such simple objects (called its ``composition factors'').  We refer the reader to the standard texts, such as \cite{Beilinson1982,Cataldo2009,Hotta2008,Kashiwara1994}, for a thorough exposition.  For the remainder of the paper, we give some examples of how perversity can be applied to calculate the Lichnerowicz cohomology in low degrees, leaving a more detailed analysis for future work.

Let $\U = \X\setminus \Y$ be the open symplectic leaf and let $j : \U \to \X$ be the inclusion.  The map $\pis : \forms[1]{\X} \to \cT{\X}$ given by contraction with $\pi$ is an isomorphism over $\U$ and induces an isomorphism $\Tpol{\X}|_\U \cong \forms{\U}$ with the de Rham complex. By adjunction we obtain a map $\Tpol{\X} \to j_*\forms{\U}$.  It is also convenient to consider the logarithmic de Rham complex $\forms{\X}(\log \Y) \subset j_*\forms{\U}$ of P.~Deligne~\cite{Deligne1970} and K.~Saito~\cite{Saito1980}, which consists of forms $\omega$ such that $\omega$ and $\dd \omega$ have, at worst, poles of order one along $\Y$. 

 Because $(\X,\pi)$ is log symplectic, the map $\pis : \forms[1]{\X} \to \cT{\X}$ extends to an isomorphism $\forms[1]{\X}(\log\Y) \cong \cT{\X}(-\log\Y)$, where $\cT{\X}(-\log \Y)\subset \cT{\X}$ is the sheaf of vector fields tangent to $\Y$.  We thus obtain a commutative diagram of complexes
\[
\xymatrix{
& \forms{\X}(\log \Y) \ar[rd]\ar[ld] \\
\Tpol{\X} \ar[rr] && j_*\forms{\U}.
}
\]
It is known that both diagonal maps are quasi-isomorphisms over the open set $\X \setminus \Y_\sing$, where $\Y_\sing$ is the singular locus of $\Y$.  For $j_*\forms{\U}$, this is standard, e.g.~\cite{Deligne1970,Hodge1955}.  For $\Tpol{\X}$, this can be found, for example, in \cite[Lemma 3]{Marcut2014}.  Alternatively, one can show that the corresponding D-modules are isomorphic: at any point of $\Y \setminus \Y_\sing$, Weinstein's splitting theorem~\cite{Weinstein1983} reduces the problem to the case of Poisson surfaces, where it follows by a direct calculation using \autoref{prop:pres}, and the fact that all three D-modules have length two.

As a result, we obtain an exact sequence of perverse sheaves
\begin{align}
\xymatrix{
0 \ar[r] &  \sK \ar[r]& \Tpol{\X}[n] \ar[r]& j_*\forms{\U}[n] \ar[r] & \sK' \ar[r] &  0 
}\label{eqn:tpol-dr}
\end{align}
where the kernel and cokernel $\sK$ and $\sK'$ are supported on $\Y_\sing$.  For a given holonomic Poisson manifold, the explicit determination of the perverse sheaves $\sK$ and $\sK'$ may be quite difficult; they will be built from many composition factors given by intersection cohomology complexes of local systems on singular strata of the Poisson structure.  But perversity gives us a key piece of information: the hypercohomology groups of $\sK$ and $\sK'$ must vanish outside the range $[-d,d]$ where $d = \dim \Y_\sing$.

\subsection{Normal degeneracy divisors}

The simplest case is the one in which the degeneracy divisor $\Y$ is normal,   i.e.~the codimension of $\Y_\sing$ in $\X$ is at least three.  We recall from \cite{Gualtieri2013a} that if $\Y_\sing$ is nonempty, it must have codimension \emph{at most} three.  Splitting \eqref{eqn:tpol-dr} into a pair of short exact sequences of perverse sheaves, and using the vanishing of the hypercohomology $\coH[j]{\X,\sK}$ and $\coH[j]{\X,\sK'}$ for $j<3-n$ we obtain
\begin{theorem}\label{thm:normal}
\thmnormal
\end{theorem}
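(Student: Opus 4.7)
The plan is to extract the theorem from the four-term exact sequence \eqref{eqn:tpol-dr} of perverse sheaves by a hypercohomology chase. First, I would split it into two short exact sequences by introducing the image perverse sheaf $\sQ$ of the map $\Tpol{\X}[n] \to j_*\forms{\U}[n]$:
\[
0 \to \sK \to \Tpol{\X}[n] \to \sQ \to 0, \qquad 0 \to \sQ \to j_*\forms{\U}[n] \to \sK' \to 0,
\]
and take the two associated long exact sequences in hypercohomology.

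Before chasing, I would record the two identifications of the outer terms. By definition, $\coH[j]{\X;\Tpol{\X}[n]} = \Hpi[j+n]{\X}$. On the other side, since $j \colon \U \to \X$ is an open embedding and the holomorphic de Rham complex $\forms{\U}$ is a Deligne-style model for $Rj_*\CC_\U$, we have $\coH[j]{\X;j_*\forms{\U}[n]} = \coH[j+n]{\U;\CC}$. Moreover, the composition $\Tpol{\X}[n] \to \sQ \hookrightarrow j_*\forms{\U}[n]$ is exactly the map arising from the commutative triangle preceding \eqref{eqn:tpol-dr}, both of whose diagonals are built from the log-symplectic isomorphism $\pis \colon \forms[1]{\X}(\log\Y) \cong \cT{\X}(-\log\Y)$; on hypercohomology it therefore agrees with the restriction map in the statement.

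Now normality of $\Y$ gives $d := \dim \Y_\sing \le n-3$, so by the perversity bound recalled at the end of \autoref{sec:perv}, the hypercohomology groups $\coH[j]{\X;\sK}$ and $\coH[j]{\X;\sK'}$ vanish for $j < 3-n$. Running the long exact sequences with $j = i-n$ for $i \in \{0,1,2\}$, every potentially problematic term vanishes except $\coH[3-n]{\X;\sK}$ and $\coH[2-n]{\X;\sK'}$, both of which appear only to the right of the maps we care about. Concretely: for $i = 0, 1$, both $\coH[i-n]{\X;\sK}$ and $\coH[i-n+1]{\X;\sK}$ vanish, yielding $\Hpi[i]{\X} \cong \coH[i-n]{\X;\sQ}$, and similarly $\coH[i-n-1]{\X;\sK'}$ and $\coH[i-n]{\X;\sK'}$ vanish, yielding $\coH[i-n]{\X;\sQ} \cong \coH[i]{\U;\CC}$. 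For $i = 2$, the same reasoning with one vanishing replaced by the possibility of a nontrivial cokernel gives a chain of injections $\Hpi[2]{\X} \hookrightarrow \coH[2-n]{\X;\sQ} \hookrightarrow \coH[2]{\U;\CC}$.

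The diagram chase itself is routine; the only point requiring a little care is checking that $j_*\forms{\U}$ indeed computes $Rj_*\CC_\U$ and that the composite map on hypercohomology matches the restriction map in the statement. Both are standard, so I do not expect any substantive obstacle beyond tracking the degree shifts correctly.
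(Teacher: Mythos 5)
Your proposal is correct and follows essentially the same route as the paper: the paper's proof is exactly to split \eqref{eqn:tpol-dr} into two short exact sequences of perverse sheaves and use the vanishing of $\coH[j]{\X;\sK}$ and $\coH[j]{\X;\sK'}$ for $j<3-n$, which holds since $\dim\Y_\sing\le n-3$ by normality. The identifications you flag (that $j_*\forms{\U}\cong Rj_*\CC_\U$ because $j$ is a Stein open embedding, and that the composite is the restriction map coming from $\pis$) are exactly the ones implicit in the paper's setup in \autoref{sec:perv}.
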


The calculation of the cohomology is facilitated by the following
\begin{lemma}\label{lem:IH}
If $\Y \subset \X$ is a normal hypersurface with complement $\U = \X\setminus \Y$, then there is an exact sequence
\begin{align}\begin{gathered}
\xymatrix{
0\ar[r] & \coH[1]{\X;\CC} \ar[r] & \coH[1]{\U;\CC}\ar[r] & \coH[0]{\Y;\CC} \ar[lld] \\ & \coH[2]{\X;\CC} \ar[r] & \coH[2]{\U;\CC} \ar[r] & \IH[1]{\Y;\CC}
}\end{gathered}\label{eqn:gysin}
\end{align}
where $\IH{\Y;\CC}$ denotes the intersection cohomology of $\Y$ and the connecting homomorphism $\coH[0]{\Y;\CC} \to \coH[2]{\X;\CC}$ is the Gysin map.
\end{lemma}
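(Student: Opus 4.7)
The plan is to derive the sequence from the standard excision long exact sequence for the pair $(\X,\U)$, reinterpret the local cohomology terms using Borel--Moore homology of $\Y$, and then use the normality hypothesis to identify these with ordinary and intersection cohomology.

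First I would take the hypercohomology of the distinguished triangle
\[
i_*i^!\CC_\X \to \CC_\X \to Rj_*\CC_\U \xrightarrow{[1]}
\]
on $\X$, where $i\colon \Y\hookrightarrow \X$ denotes the inclusion.  This produces a long exact sequence of local cohomology groups, and since $\X$ is smooth of complex dimension $n$ the purity isomorphism $i^!\CC_\X \cong \omega_\Y^\bullet[-2n]$ identifies each local cohomology group as a Borel--Moore homology group: $H^k_{\Y}(\X;\CC) \cong H^{BM}_{2n-k}(\Y;\CC)$.

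Next I would compute the low-degree Borel--Moore groups.  Since $\Y$ has real dimension $2n-2$, one has $H^{BM}_{2n-1}(\Y;\CC)=0$, yielding injectivity of $\coH[1]{\X;\CC}\to \coH[1]{\U;\CC}$.  The top-degree Borel--Moore homology $H^{BM}_{2n-2}(\Y;\CC)$ is freely generated by the fundamental classes of the irreducible components of $\Y$; normality forces each connected component of $\Y$ to be irreducible, so this is canonically identified with $\coH[0]{\Y;\CC}$, and the connecting map to $\coH[2]{\X;\CC}$ coincides with the Gysin map by construction.

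The remaining identification is $H^{BM}_{2n-3}(\Y;\CC)\cong \IH[1]{\Y;\CC}$.  For this, consider the canonical morphism of complexes on $\Y$
\[
\sIC{\Y} \to \omega_\Y^\bullet[1-n],
\]
obtained as the Verdier dual of the natural map $\CC_\Y[n-1]\to \sIC{\Y}$ using self-duality of the intersection cohomology sheaf.  This morphism is an isomorphism on the smooth locus $\Y_\reg$, so its cone is supported on $\Y_\sing$; normality gives $\dim_\CC \Y_\sing \leq n-3$.  Since $\sIC{\Y}$ is simple as a perverse sheaf, the perverse kernel of the morphism vanishes, so the cone is (up to shift) a perverse sheaf on a variety of complex dimension at most $n-3$.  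Artin vanishing then forces its hypercohomology to vanish in the narrow range of degrees relevant for comparing $\IH[1]{\Y;\CC}$ with $H^{BM}_{2n-3}(\Y;\CC)$, producing the desired isomorphism.  Substituting into the excision sequence yields the stated one.

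The main obstacle is the bookkeeping in the last step: one has to verify carefully that the shifts line up so that the hypercohomology vanishing kills both the kernel and cokernel of the induced map $\IH[1]{\Y;\CC}\to H^{BM}_{2n-3}(\Y;\CC)$, taking into account that $\omega_\Y^\bullet[1-n]$ need not itself be perverse unless $\Y$ is Cohen--Macaulay.  In the dimension range of interest this subtlety is handled by the direct analysis of the cone described above.
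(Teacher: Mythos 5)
Your argument is correct and follows essentially the same route as the paper: both start from the attaching triangle $i_*i^!\CC_\X \to \CC_\X \to Rj_*\CC_\U$ and use $\codim(\Y_\sing,\X)\ge 3$ to show that the discrepancy between $i^!\CC_\X[n+1]$ (equivalently, the Borel--Moore homology of $\Y$) and the intersection complex of $\Y$ has no hypercohomology in the relevant degrees --- the paper packages this via a composition series of the perverse cokernel $\sQ$, while you use the cone of your dual attaching map $\sIC{\Y}\to\omega_\Y^\bullet[1-n]$. The only imprecision is the claim that this cone is a shifted perverse sheaf; in general it is merely an object of ${}^{p}D^{\ge 0}$ whose perverse cohomologies are supported on $\Y_\sing$, but that weaker statement already forces its hypercohomology to vanish in all degrees below $3-n$, which covers the two degrees $1-n$ and $2-n$ you need.
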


\begin{proof}
Because $\Y$ is a hypersurface, $Rj_*\CC_\U[n]$ is perverse, so the standard attaching triangle associated with the closed embedding $i : \Y \to \X$ and the open complement $j : \U \to \X$ gives an exact sequence of perverse sheaves
\begin{align}
\begin{gathered}
\xymatrix{
0 \ar[r] & \CC_{\X}[n] \ar[r] & Rj_*\CC_\U[n] \ar[r] & \sQ \ar[r] & 0,
}\end{gathered} \label{eqn:ses1}
\end{align}
where $\sQ = Ri_*i^!\CC_\X[n+1]$ and $i^!$ denotes the exceptional inverse image. 

On the smooth locus $\Y_\reg = \Y\setminus \Y_\sing$, we have  $\sQ|_{\Y_\reg} \cong \CC_{\Y_\reg}[n-1]$, so the intersection complex $\sIC{\Y}[n-1]$ is the unique composition factor of $\sQ$ whose support has codimension one, i.e.~we have sub-perverse sheaves $\sQ_0 \subset \sQ_1 \subset \sQ$ such that $\sQ_1/\sQ_0 \cong \sIC{\Y}[n-1]$ and $\sQ_0$ and $\sQ/\sQ_1$ are supported on $\Y_\sing$, giving exact sequences
\begin{align}
\begin{gathered}
\xymatrix{
0 \ar[r] & \sQ_0 \ar[r] & \sQ_1 \ar[r] & \sIC{\Y}[n-1] \ar[r] & 0
}\end{gathered} \label{eqn:ses2}
\end{align}
\vspace{-2em}
\begin{align}
\begin{gathered}
\xymatrix{
0 \ar[r] & \sQ_1 \ar[r] & \sQ \ar[r] & \sQ/\sQ_1 \ar[r] & 0.
}\end{gathered} \label{eqn:ses3}
\end{align}
 Since $\dim \Y_{\sing} \le n-3$, we have $\coH[j]{\sQ_0}=\coH[j]{\sQ/\sQ_1} = 0$ for $j < -(n-3)$.  The desired exact sequence \eqref{eqn:gysin} now follows from the long exact sequences associated to \eqref{eqn:ses1}, \eqref{eqn:ses2} and \eqref{eqn:ses3}.  
 The statement about the connecting homomorphism is standard when $\Y$ is smooth (e.g.~\cite[Proposition 3.2.11]{Dimca2004}).  The case when $\Y$ is normal follows: the restriction maps $\coH[0]{\Y;\CC} \to \coH[0]{\Y\setminus\Y_\sing;\CC}$ and $\coH[2]{\X;\CC}\to\coH[2]{\X\setminus \Y_\sing;\CC}$ are isomorphisms because $\codim(\Y_\sing,\X)\ge 3$.
\end{proof}

\subsection{Elliptic Poisson structures}
 As an example of the case when $\Y$ is normal, we consider Poisson structures associated with smooth curves of genus one (elliptic curves without an origin).

As a warmup, note that a Poisson structure $\pi$ on the projective plane $\PP^2$ will vanish on an anticanonical divisor $\E \subset \PP^2$, i.e.~a cubic curve.  If $\E$ is smooth, it will have genus one. Moreover, the modular vector field, while not globally well-defined on $\PP^2$, will restrict to a canonical nonvanishing vector field $\Mod \in \Gamma(\E,\cT{\E})$.  Note that the embedding $\E \subset \PP^2$ is unique up to projective transformations, and  determines $\pi$ up to rescaling.  Hence, the pair $(\E,\Mod)$ determines $\pi$ up to isomorphism.  Correspondingly, the deformation space $\Hpi[2]{\PP^2} \cong \coH[2]{\U;\CC}$ has dimension two: it is isomorphic to $\coH[1]{\E;\CC}$ by the Gysin sequence.

Feigin and Odesskii \cite{Feigin1989,Feigin1998} constructed higher-dimensional analogues of these Poisson structures. Given a pair $(d,r)$ of coprime integers, a smooth projective curve $\E$ of genus one, and a vector field $\Mod$ on $\E$, they produce a Poisson structure $\pi = q_{d,r}(\E,\Mod)$ on the projective space $\PP^n$ where $n=d-1$.

We shall restrict to the simplest case, in which $n$ is even and $r=1$.  In this case, $\pi$ is log symplectic and the classical geometry of its modular foliation is understood~\cite[Section 8]{Gualtieri2013a}.  The  zero locus of $\pi$ is a copy of $\E$, embedded in $\PP^n$ as an elliptic normal curve, and the vector field $\Mod$ on $\E$ is exactly the restriction of the modular vector field.  There is a single even-dimensional modular leaf, namely the open symplectic leaf $\U$.  Meanwhile, there is a unique modular leaf of each odd dimension $1,3,\ldots,n-1$, given by an appropriate secant variety of $\E$.  Hence, by \autoref{thm:modular}, $\pi$ is holonomic.

Let $k = \tfrac{n}{2}-1$. Then the degeneracy hypersurface $\Y \subset \PP^n$ is the union of all the secant $k$-planes of $\E$, i.e.~the $k$-planes in $\PP^n$ that meet $\E \subset \PP^n$ in $k+1$ points, counted with multiplicity. It is normal, so we may apply \autoref{thm:normal} and \autoref{lem:IH} to compute $\Hpi{\PP^n}$ in low degrees.

Now $\Y$ is connected and $[\Y] \in \coH[2]{\PP^n;\CC}$ is a generator, so the Gysin map $\coH[0]{\Y;\CC} \to \coH[2]{\PP^n;\CC}$ is an isomorphism.  Meanwhile $\coH[1]{\PP^n;\CC} = 0$, giving
\begin{align*}
\coH[1]{\U;\CC} &= 0 & \coH[2]{\U;\CC}&\subset\IH[1]{\Y;\CC}
\end{align*}
by \autoref{lem:IH}.  By the decomposition theorem~\cite{Beilinson1982}, the intersection cohomology embeds  as a subspace $\IH[1]{\Y;\CC}\subset \coH[1]{\Y';\CC}$ where $\Y' \to \Y$ is any resolution of singularities.  In the case at hand, there is a canonical resolution described in detail in \cite[Proposition 8.15]{GrafvBothmer2004}, namely
\[
\Y' = \set{(\D,p) \in \bS^{k+1}(\E) \times \PP^n}{p \in \mathrm{span}(\D)}
\]
where $\bS^{k+1}(\E) = (\E \times \cdots \times \E)/\bS_{k+1}$ is the symmetric power of $\E$, and $\mathrm{span}(\D)$ is the unique $k$-plane that contains the degree-$(k+1)$ divisor $\D \subset \E$.  That such a $k$-plane is unique is the content of \cite[Lemma 13.2]{GrafvBothmer2004}.  The resolution map $\Y' \to \Y$ is simply the projection $(\D,p)\mapsto p$. 

Now the fibre of $\Y'$ over  $\D \in \bS^{k+1}(\E)$ is the $k$-plane $\mathrm{span}(\D)$, so  the projection $\Y' \to \bS^{k+1}(\E)$ is a $\PP^k$-bundle.  In turn, the map $\bS^{k+1}(\E) \to \Pic[k+1]{\E} \cong \E$, sending a divisor to its linear equivalence class, is also a $\PP^k$-bundle.  Hence $\Y'$ is a $\PP^k$-bundle over a $\PP^k$-bundle over $\E$, so the projective bundle formula gives $\coH[1]{\Y';\CC} \cong \coH[1]{\E;\CC}$.  We therefore have a canonical sequence of inclusions
\begin{align*}
\Hpi[2]{\PP^n} \subset \coH[2]{\U;\CC}\subset \IH[1]{\Y;\CC}\subset \coH[1]{\Y';\CC} = \coH[1]{\E;\CC}.
\end{align*}

But the Feigin--Odesskii Poisson structures give a two-parameter family of nontrivial deformations of $\pi$, so $\dim \Hpi[2]{\PP^n} \ge 2$ by \autoref{thm:gc-def}.  On the other hand, $\dim \coH[1]{\E;\CC} =2$.  Hence all of these inclusions are isomorphisms, giving
\begin{theorem}\label{thm:elliptic}
\thmelliptic
\end{theorem}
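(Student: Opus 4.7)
The plan is to assemble the pieces already set up in the discussion immediately preceding the statement, together with a dimension count. First, holonomicity of $\pi$ follows from \autoref{thm:modular}(1): the modular foliation has a single leaf in each dimension (one secant variety of $\E$ in each odd dimension $1,3,\ldots,n-1$, together with the open leaf $\U$), so it is locally finite. The equality $\Hpi[0]{\PP^{d-1}} = \CC$ is immediate because the only Casimirs on a compact connected Poisson manifold with a dense symplectic leaf are the constants.

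Next I would compute $\Hpi[1]{\PP^{d-1}}$. Since $\Y$ is normal, \autoref{thm:normal} identifies $\Hpi[1]{\PP^{d-1}}$ with $\coH[1]{\U;\CC}$, and the Gysin sequence of \autoref{lem:IH} gives
\[
0 \to \coH[1]{\PP^{d-1};\CC} \to \coH[1]{\U;\CC} \to \coH[0]{\Y;\CC} \to \coH[2]{\PP^{d-1};\CC}.
\]
The left-hand term vanishes, and the connecting map is an isomorphism because both endpoints are one-dimensional ($\Y$ is connected and $[\Y]$ generates $\coH[2]{\PP^{d-1};\CC}$). Hence $\coH[1]{\U;\CC} = 0$.

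The core step is to pin down $\Hpi[2]{\PP^{d-1}}$. I would use the chain of inclusions already assembled,
\[
\Hpi[2]{\PP^{d-1}} \subset \coH[2]{\U;\CC} \subset \IH[1]{\Y;\CC} \subset \coH[1]{\Y';\CC} \cong \coH[1]{\E;\CC},
\]
in which the first inclusion comes from \autoref{thm:normal}, the second from the rest of \autoref{lem:IH}, the third from the decomposition theorem, and the final isomorphism from the iterated $\PP^k$-bundle description of $\Y'$ together with the projective bundle formula. The right-hand side has dimension two, so it suffices to bound $\dim \Hpi[2]{\PP^{d-1}} \geq 2$ from below. Since $\Hpi[1]{\PP^{d-1}} = 0$, \autoref{thm:gc-def} provides a universal deformation with base $\W \subset (\Hpi[2]{\PP^{d-1}},0)$. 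The Feigin--Odesskii family $q_{d,1}(\E,\Mod)$ furnishes a two-parameter family of nontrivial deformations classified by a map from the $(\E,\Mod)$-parameter space to $\W$, so the image of its Kodaira--Spencer map is at least two-dimensional.

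The main obstacle is precisely this last point: verifying that the two parameters $(\E,\Mod)$ produce genuinely linearly independent classes in $\Hpi[2]{\PP^{d-1}}$. The variation in $\Mod$ rescales the Poisson bracket and yields one independent direction, while varying the $j$-invariant of $\E$ changes the isomorphism class of the embedded pair $(\PP^{d-1},\Y)$ and yields a second independent direction transverse to the first. Once this dimension count is in hand, all inclusions in the chain become equalities, which delivers the canonical isomorphism $\Hpi[2]{\PP^{d-1}} \cong \coH[1]{\E;\CC}$. The classifying map from the two-dimensional $(\E,\Mod)$-germ to the two-dimensional smooth germ $\W$ is then an injection on tangent spaces between germs of equal dimension, hence an isomorphism, establishing that the Feigin--Odesskii family is the universal analytic deformation of $\pi$.
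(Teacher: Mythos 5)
Your proposal follows the paper's proof essentially step for step: holonomicity via the finiteness of the modular foliation and \autoref{thm:modular}, the computations of $\Hpi[0]$ and $\Hpi[1]$ via \autoref{thm:normal} and the Gysin sequence of \autoref{lem:IH}, the chain of inclusions $\Hpi[2]{\PP^{d-1}} \subset \coH[2]{\U;\CC}\subset \IH[1]{\Y;\CC}\subset \coH[1]{\Y';\CC} \cong \coH[1]{\E;\CC}$ via the decomposition theorem and the iterated $\PP^k$-bundle resolution, and the squeeze against the two-parameter Feigin--Odesskii family using \autoref{thm:gc-def}. The paper is no more detailed than you are on the effectivity of the two parameters $(\E,\Mod)$ (it relies on the cited fact that $q_{d,1}(\E,\Mod)$ is determined up to isomorphism by the pair), so there is no essential difference in approach.
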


\subsection{Surfaces}
\label{sec:surf}
To illustrate how complications can arise when the divisor $\Y\subset \X$ is not normal, we sketch the case of a holonomic Poisson surface $(\X,\pi)$, following  Goto~\cite{Goto2016}.  We recall that for surfaces, holonomicity is equivalent to the log symplectic condition (\autoref{cor:surf-red}), so we adopt this terminology.  Furthermore, we note that for surfaces, the sheaf $\forms[1]{\X}(\log \Y)$ is always locally free and the complex $\forms{\X}(\log \D)[2]$ is always perverse; see \cite[(1.7)]{Saito1980} and \cite[Corollary 4.2.2]{CalderonMoreno1999}.

The image of the map $\forms[1]{\X}(\log\Y) \to \cT{\X}$ is the subsheaf $\cT{\X}(-\log\Y)$ of vector fields tangent to $\Y$, while the image of $\forms[2]{\X}(\log \Y) \to \Tpol[2]{\X}$ is the sheaf of bivectors that vanish on $\Y$.  Hence $\sQ = \coker(\forms{\X}(\log\D)\to\Tpol{\X})$ has the form
\[
\sQ = (\xymatrix{0 \ar[r] &
\cT{\X}/\cT{\X}(-\log\Y) \ar[r] & \Tpol[2]{\X}|_\Y
}).
\]
To understand the meaning of $\sQ$, we observe that since $\Y$ is the zero locus of $\pi \in \Tpol[2]{\X}$, the tangent complex of $\Y$ has the form
\[
\Tcx{\Y} = \rbrac{\xymatrix{
 \cT{\X}|_\Y \ar[r]^-{\dd \pi|_\Y} & \Tpol[2]{\X}|_{\Y}
}},
\]
where $\dd \pi$ is the derivative of $\pi$ along $\Y$.  The zeroth cohomology $\scoH[0]{\Tcx{\Y}}$ is the usual sheaf of vector fields  on $\Y$, or equivalently the image of the restriction map $\cT{\X}(-\log \Y) \to \cT{\X}|_\Y$.  Meanwhile, the first cohomology $\scoH[1]{\Tcx{\Y}} \cong \Tpol[2]{\X}|_{\Y_\sing}$ is the space of infinitesimal deformations of the singularities of $\Y$; its support is the singular locus $\Y_\sing$, and the dimension of its stalk at $p \in \Y_\sing$ is the Tjurina number, i.e.~the dimension of the Jacobi ring $\cO{\X,p}/(f,\cvf{w} f,\cvf{z}f)$ where $f$ is a local defining equation for $\Y$ in coordinates $w,z$.

It follows easily that $\scoH[j]{\sQ} = 0$ for $j \ne 2$ and $\scoH[2]{\sQ} \cong \scoH[1]{\Tcx{\Y}}$.  Hence we have an exact sequence of perverse sheaves:
\begin{align}
\xymatrix{
0 \ar[r] & \forms{\X}(\log \Y)[2] \ar[r] & \Tpol{\X}[2] \ar[r] & \scoH[1]{\Tcx{\Y}} \ar[r] & 0
} \label{eqn:goto}
\end{align}

We now note that since $\X$ is a surface, the map $\forms{\X}(\log \Y) \to j_*\forms{\U}$ is a quasi-isomorphism if and only if $\Y$ can be written locally as the zero set of a quasi-homogeneous polynomial~\cite{CalderonMoreno2002,Castro-Jimenez1996}.  This condition always holds when $(\X,\pi)$ is a projective log symplectic surface, as follows from the classification~\cite[Section 7]{Ingalls1998} or by examining what happens when we blow up the possible minimal models.  In this case, the map $\Tpol{\X} \to j_*\forms{\U}$ splits the sequence~\eqref{eqn:goto}, giving
\[
\Tpol{\X}[2] \cong j_*\forms{\U}[2]\oplus \scoH[1]{\Tcx{\Y}},
\]
and identifying the contributions $\sK \cong \scoH[1]{\Tcx{\Y}}$ and $\sK'=0$ in \eqref{eqn:tpol-dr}.  We arrive at the following result, first stated by Goto~\cite[Proposition 6]{Goto2016} for nodal curves:

\begin{theorem}\label{thm:surf}
If the degeneracy curve $\Y \subset \X$ of a log symplectic surface $(\X,\pi)$ has only quasi-homogeneous singularities (e.g.~if $\X$ is projective) then we have
\[
\Hpi[k]{\X} \cong \begin{cases}
\coH[k]{\U;\CC}  & k \ne 2 \\
\coH[2]{\U;\CC}\oplus \bigoplus_{p \in \Y_\sing} \scoH[1]{\Tcx{\Y}}_p  & k = 2
\end{cases}
\]
where $\U = \X\setminus \Y$ is the symplectic locus.
\end{theorem}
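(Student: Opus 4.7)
The plan is to carry out the program set up in the preceding discussion: the perverse short exact sequence \eqref{eqn:goto} reduces the theorem to two tasks, namely splitting the sequence in the derived category and then computing hypercohomology of the two summands. Perversity of $\Tpol{\X}[2]$ is guaranteed by \autoref{cor:surf-red} together with \autoref{thm:perverse}; perversity of $\forms{\X}(\log\Y)[2]$ on surfaces is classical by \cite[(1.7)]{Saito1980} and \cite[Corollary 4.2.2]{CalderonMoreno1999}; and the quotient $\scoH[1]{\Tcx{\Y}}$ is a skyscraper sheaf on the discrete locus $\Y_\sing$, hence perverse by default.

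To produce the splitting, I would invoke the logarithmic comparison theorem of Castro-Jim\'enez--Narv\'aez-Macarro~\cite{Castro-Jimenez1996} and Calder\'on-Moreno--Narv\'aez-Macarro~\cite{CalderonMoreno2002}: under the quasi-homogeneity hypothesis, the natural map $\forms{\X}(\log\Y)\to j_*\forms{\U}$ is a quasi-isomorphism. Composing the Lichnerowicz-to-de-Rham map $\Tpol{\X}\to j_*\forms{\U}$ with the inverse of this quasi-isomorphism yields a morphism $\Tpol{\X}[2]\to\forms{\X}(\log\Y)[2]$ in the derived category, and commutativity of the triangle in the preceding exposition forces it to restrict to the identity on the subobject $\forms{\X}(\log\Y)[2]$. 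This is the desired section of the inclusion in \eqref{eqn:goto}, so the sequence splits as
\[
\Tpol{\X}[2] \;\cong\; \forms{\X}(\log\Y)[2] \,\oplus\, \scoH[1]{\Tcx{\Y}}.
\]
For the projective case, quasi-homogeneity of the singularities of $\Y$ is not a hypothesis but a consequence, which I would verify by running through the classification of projective log symplectic surfaces in~\cite[Section 7]{Ingalls1998} and checking the singularities produced by the blow-ups that link an arbitrary model to its minimal representative.

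The final step is to take hypercohomology. The shifted complex $\forms{\X}(\log\Y)[2]$ is quasi-isomorphic to $j_*\forms{\U}[2]$, so its contribution to $\Hpi[k]{\X}=\coH[k-2]{\X,\Tpol{\X}[2]}$ is $\coH[k]{\X,\forms{\X}(\log\Y)} = \coH[k]{\U;\CC}$ in all degrees. The skyscraper summand $\scoH[1]{\Tcx{\Y}}$, supported on finitely many points, has hypercohomology concentrated in degree zero and equal to $\bigoplus_{p\in\Y_\sing}\scoH[1]{\Tcx{\Y}}_p$; the shift $[2]$ then places this contribution in $\Hpi[2]{\X}$ and nowhere else. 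Assembling the two pieces gives the formula in the statement.

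The main obstacle is the splitting step, where the quasi-homogeneity hypothesis enters essentially: the logarithmic comparison quasi-isomorphism genuinely fails for general plane curve singularities, and without it the logarithmic and skyscraper summands of $\Tpol{\X}[2]$ cannot be cleanly decoupled. Once the splitting is in hand the remainder is formal bookkeeping that makes no further use of the specific geometry of $(\X,\pi)$.
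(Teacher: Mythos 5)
Your proposal is correct and follows essentially the same route as the paper: the perverse exact sequence \eqref{eqn:goto}, the logarithmic comparison theorem under the quasi-homogeneity hypothesis (verified for projective surfaces via the classification), the splitting induced by the map $\Tpol{\X}\to j_*\forms{\U}$, and the hypercohomology bookkeeping. The only difference is one of explicitness---you spell out that the retraction comes from composing with the inverse of the quasi-isomorphism $\forms{\X}(\log\Y)\to j_*\forms{\U}$, which the paper leaves implicit.
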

This result recovers various calculations of Poisson cohomology for surfaces that were obtained by more direct means~\cite{Hong2011,Monnier2002}, and matches the recent calculation~\cite{Belmans2017} of the Hochschild cohomology of the quantizations in the case of $\PP^2$ and $\PP^1\times \PP^1$, as expected from Kontsevich's formality theorem~\cite{Kontsevich2001}.

\bibliographystyle{hyperamsplain}
\bibliography{holonomic-poisson}

\end{document}